\newtheorem{Def}{Definition}
\newtheorem{Lem}{Lemma}
\newtheorem{Prop}{Proposition}
\newtheorem{Thm}{Theorem}
\newtheorem{Cor}{Corollary}
\newtheorem{Rem}{Remark}
\newenvironment{Pf}{ Proof.}{\(\square\)}
\title[Average methods and their applications ...]{Average methods and their applications in differential geometry I}
\author{Cs. Vincze}
\address{Inst. of Math., Univ. of Debrecen \\
H-4010 Debrecen, P.O.Box 12 \\
Hungary}
\email{csvincze@science.unideb.hu}
\keywords{Minkowski functionals, Funk metrics, Finsler spaces}
\subjclass{53C60, 53C65 and 52A21.}
\begin{document}
\begin{abstract}
In Minkowski geometry the metric features are based on a compact convex body containing the origin in its interior. This body works as a unit ball with its boundary formed by the unit vectors. Using one-homogeneous extension we have a so-called Minkowski functional to measure the lenght of vectors. The half of its square is called the energy function. Under some regularity conditions we can introduce an average Euclidean inner product by integrating the Hessian matrix of the energy function on the Minkowskian unit sphere. Changing the origin in the interior of the body we have a collection of Minkowskian unit balls together with Minkowski functionals depending on the base points. It is a kind of special Finsler manifolds called a Funk space. Using the previous method we can associate a Riemannian metric as the collections of the Euclidean inner products belonging to different base points. We investigate this procedure in case of Finsler manifolds in general. Central objects of the associated Riemannian structure will be expressed in terms of the canonical data of the Finsler space. We take one more step forward by introducing associated Randers spaces by averaging of the vertical derivatives of the Finslerian fundamental function. Such kind of perturbation will have a crucial role when we apply the general results to Funk spaces together with some contributions to Brickell's conjecture on Finsler manifolds with vanishing curvature tensor $\mathbb{Q}$. 
\end{abstract}
\maketitle
\section{An observation on homogeneous functions}

\noindent
I. Let $\mathbb{R}^n$ be the standard real coordinate space equipped with the canonical inner product. The canonical coordinates are denoted by $y^1,\ldots,y^n$. The set $B=\{v\in \mathbb{R}^n\ | \ (v^1)^2+\ldots(v^n)^2\leq 1\}$
is the unit ball with boundary $\partial B$ formed by Euclidean unit vectors. 

\vspace{0.2cm}
\noindent
II. The function
$\varphi\colon \mathbb{R}^n\to \mathbb{R}$ is called \emph{positively homogene\-ous of degree $k$} if $\varphi(tv)=t^k\varphi(v)$ for all $v\in \mathbb{R}^n$ and $t>0$. In case of $k\neq 0$ homogeneous functions are uniquely determined by the restriction to $\partial B$. If $k=0$ then the value at the origin is optional. The extension of a function on $\partial B$ to a positively homogeneous one heritages the analytical properties at each non-zero point of the space. The origin has a crucial role in the theory of homogeneous functions. One of the most important results is Euler's theorem which states that if $\varphi$ is differentiable away from the origin, then it is positively homogeneous of degree $k$ if and only if 
$C\varphi =k\varphi$, where
$$C:=y^1\frac{\partial}{\partial y^1}+\ldots+y^n\frac{\partial}{\partial y^n}$$
is the so-called Liouville vector field. Note that the partial derivatives of $\varphi$ are positively homogeneous functions of degree $k-1$. The analytic properties are reduced at the origin in general. It is known that if a function of class $C^k$ is positively homogeneous of degree $k\geq 0$ then it must be polynomial of degree $k$. 

\begin{Lem}
Suppose  that $\varphi$ is positively homogeneous of degree zero, differentiable away from the origin and $\varphi(v)>0$ for all $v\neq \bf{0}$. Then the mapping $T\colon v\mapsto T(v):=\varphi(v)v$
has the Jacobian $\det J_v T=\varphi^{n}(v).$
\end{Lem}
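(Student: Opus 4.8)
The plan is to compute the Jacobian matrix of $T$ directly and to recognize its algebraic structure as a rank-one perturbation of a scalar multiple of the identity. Writing the components as $T^i(v)=\varphi(v)\,v^i$ and differentiating with the product rule gives
$$\frac{\partial T^i}{\partial y^j}(v)=\varphi(v)\,\delta^i_j+v^i\,\frac{\partial \varphi}{\partial y^j}(v),$$
so that $J_vT=\varphi(v)\,I+v\,(\nabla\varphi)^{\top}$, the sum of $\varphi(v)$ times the identity and the rank-one matrix whose $(i,j)$ entry is $v^i\,\partial\varphi/\partial y^j$. Here $v$ is regarded as a column vector and $\nabla\varphi$ as the column of partial derivatives.

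The central step is then to evaluate the determinant of this rank-one update. I would use the matrix determinant lemma: for an invertible $A$ and vectors $u,w$ one has $\det(A+uw^{\top})=\det(A)\,(1+w^{\top}A^{-1}u)$. Since $\varphi(v)>0$ for $v\neq\mathbf{0}$, the matrix $A=\varphi(v)\,I$ is invertible, and taking $u=v$, $w=\nabla\varphi(v)$ yields
$$\det J_vT=\varphi^{n}(v)\,\Big(1+\frac{1}{\varphi(v)}\sum_{j=1}^n v^j\,\frac{\partial\varphi}{\partial y^j}(v)\Big).$$
Alternatively, and perhaps more transparently, one can note that the rank-one matrix $v^i\,\partial\varphi/\partial y^j$ has trace $\sum_j v^j\,\partial\varphi/\partial y^j$, and a rank-one matrix is nilpotent exactly when its trace vanishes; in that case every eigenvalue of $J_vT$ equals $\varphi(v)$ and the determinant is $\varphi^{n}(v)$ immediately.

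Either way the computation is finished by invoking Euler's theorem from the preceding discussion. Because $\varphi$ is positively homogeneous of degree zero, it satisfies $C\varphi=0$, that is $\sum_{j=1}^n v^j\,\partial\varphi/\partial y^j=0$ at every $v\neq\mathbf{0}$. Hence the correction term in the matrix determinant lemma vanishes (equivalently, the rank-one part is nilpotent), and we conclude $\det J_vT=\varphi^{n}(v)$ as claimed. I do not expect any genuine obstacle here: the only points needing care are the correct bookkeeping of the rank-one structure of the Jacobian and the positivity hypothesis $\varphi(v)>0$, which is precisely what makes $A$ invertible so that the lemma applies. The decisive simplification is the degree-zero homogeneity, which annihilates the rank-one contribution through Euler's identity.
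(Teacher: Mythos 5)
Your proof is correct, but it takes a genuinely different and more elementary route than the paper. Both arguments start from the same computation $\partial T^i/\partial y^j=\varphi\,\delta^i_j+v^i\,\partial\varphi/\partial y^j$ and both close with Euler's identity $\sum_j v^j\,\partial\varphi/\partial y^j=0$ for degree-zero homogeneity; the difference lies in how the determinant is extracted. You view $J_vT$ as the rank-one update $\varphi(v)I+v(\nabla\varphi)^{\top}$ and apply the matrix determinant lemma, or equivalently observe that a trace-free rank-one matrix $M$ satisfies $M^2=(\mathrm{trace}\,M)\,M=0$, so the rank-one part is nilpotent and $\varphi(v)$ is the only eigenvalue of $J_vT$. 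The paper instead sets $h=\ln\varphi$, introduces the family $G_m(v)=h^m(v)v$, proves by induction that $\big(J_vG_1\big)^m=J_vG_m$, sums the exponential series to identify $J_vT=\exp J_vG_1$, and then invokes $\det\exp A=e^{\mathrm{trace}\,A}$ together with $\mathrm{trace}\,J_vG_1=nh(v)$, which is the same Euler identity in disguise. Your argument is shorter, avoids infinite series and any convergence bookkeeping, and in its nilpotency form needs no invertibility at all: $\det(\varphi I+M)=\varphi^n$ for nilpotent $M$ holds with no division by $\varphi$, so strict positivity could be relaxed to nonvanishing of $\varphi$. In the paper's approach, by contrast, positivity is structurally essential, since one must form $\ln\varphi$; what that route buys in exchange is the stronger structural fact that $J_vT$ is itself a matrix exponential (of $J_vG_1$), not merely a matrix whose determinant can be computed.
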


\begin{Pf}
Let $v$ be a non-zero element in $\mathbb{R}^n$ and consider the sequence of mappings $G_m(v):=h^{m}(v)v$, where $m=0, 1, \ldots$ and $h(v):=\ln \varphi(v)$. It can be easily seen that 
$$\frac{\partial G_1^{i}}{\partial y^j}=\delta_j^i h+y^i\frac{\partial h}{\partial y^j},$$
where $\delta_j^i$ is the usual Kronecker-symbol. We also have by the zero homoge\-ni\-ty of the function $h$ that 
$$\sum_{k=1}^n\bigg(\delta_k^i h+y^i\frac{\partial h}{\partial y^k}\bigg)\bigg(\delta_j^k h+y^k\frac{\partial h}{\partial y^j}\bigg)=\delta^i_jh^2+2y^ih\frac{\partial h}{\partial y^j}$$
which means that $\big(J_v G_1\big)^2=J_v G_2$. By a simple induction 
$$\big(J_v G_1\big)^{m}=\big(J_v G_1\big)^{m-1}J_v G_1=J_v G_{m-1} J_v G_1=J_v G_{m}$$
because of
$$\sum_{k=1}^n\bigg(\delta_k^i h^{m-1}+(m-1)y^ih^{m-2}\frac{\partial h}{\partial y^k}\bigg)\bigg(\delta_j^k h+y^k\frac{\partial h}{\partial y^j}
\bigg)=$$
$$=\delta^i_jh^m+my^ih^{m-1}\frac{\partial h}{\partial y^j}.$$
Therefore
$$\exp J_v G_1:=\sum_{m=0}^{\infty}\frac{\big(J_v G_1\big)^m}{m!}=\sum_{m=0}^{\infty}\frac{J_v G_m}{m!}=e^{h(v)}\bigg(\delta_j^i+
v^i\frac{\partial h}{\partial y^j}_v
\bigg)=J_v T$$ 
and thus $\det \exp J_v G_1=\det J_v T$.
Since 
$$e^{\textrm{trace} J_v G_1}=\det \exp J_v G_1,$$
we have by the zero homogenity of the function $h$ that 
$$\textrm{trace}\  J_v G_1=nh(v)+\sum_{k=1}^n v^k\frac{\partial h}{\partial y^k}_v=nh(v)$$
and, consequently, $\det J_v T=\varphi^{n}(v)$ as was to be proved.
\end{Pf}

\section{Minkowski functionals and associated objects}

\noindent
I. Let $K\subset \mathbb{R}^n$ be a convex body containing the origin in its interior and consider a non-zero element $v\in \mathbb{R}^n$. 
The value $L(v)$ of the \emph{Minkowski functional} induced by $K$ is defined as the only positive real number such that $v/L(v)\in \partial K$, where $\partial K$ is the boundary of $K$; for the general theory of convex sets and Minkowski spaces see [9] and [15].

\begin{Def}
The function $L$ is called a \emph{Finsler-Minkowski functional} if each non-zero element $v\in \mathbb{R}^n$ has an open neighbourhood such that the restricted function is of class at least $\ \mathcal{C}^4$ and the Hessian matrix 
$$g_{ij}=\frac{\partial^2 E}{\partial y^j \partial y^i}$$
of the energy function $E:=(1/2)L^2$ is positive definite.
\end{Def}

\noindent
II. [1, 10] Since the differentiation decreases the degree of homoge\-ni\-ty in a systematical way we have that $g_{ij}$'s are positively homogeneous of degree zero. To express the infinitesimal measure of changing the inner product it is usual to introduce the (lowered) Cartan tensor by the components
\begin{equation}
\mathcal{C}_{ijk}=\frac{1}{2}\frac{\partial g_{ij}}{\partial y^k}=\frac{1}{2}\frac{\partial ^3 E}{\partial y^k \partial y^j \partial y^i}.
\end{equation}
It is totally symmetric and $y^k\mathcal{C}_{ijk}=0$
because of the zero homogenity of $g_{ij}$. Using its inverse $g^{ij}$ we can introduce the quantities $\mathcal{C}^k_{ij}=g^{kl}\mathcal{C}_{ijl}$ to express the covariant derivative 
$$\nabla_X Y=\left(X^i\frac{\partial Y^k}{\partial y^i}+X^iY^j\mathcal{C}^k_{ij}\right)\frac{\partial}{\partial y^k}$$
with respect to the Riemannian metric $g$. The formula is based on the standard Lévi-Civita process and repeated pairs of indices are automatically summed as usual. The curvature tensor $\mathbb{Q}$ can be written as
$$Q_{ijk}^l=\mathcal{C}^l_{sk}\mathcal{C}^s_{ij}-\mathcal{C}^l_{sj}\mathcal{C}^s_{ik}.$$
Since 
$$y^ig_{ij}=\frac{\partial E}{\partial y^j}=L\frac{\partial L}{\partial y^j},\ \ \textrm{i.e.}\ \ g(C,Y)=YE=L(YL),$$
we have that the Liouville vector field is an outward-pointing unit normal to the indicatrix hypersurface $\partial K:=L^{-1}(1)$. On the other hand
$\nabla_X C=X$ which means that the indicatrix is a totally umbilical hypersurface and $\textrm{div}\  C=n$. 

\vspace{0.2cm}
\noindent
III. Consider the volume form 
$$d\mu=\sqrt{\det g_{ij}}\ dy^1\wedge \ldots \wedge dy^n.$$ 
Let $f$ be a homogeneous function of degree zero. We have by Euler's theorem that
$$\textrm{div}\ (fC)=f\textrm{div}\ C+Cf=nf.$$
Using the divergence theorem (with the Liouville vector field as an outward-pointing unit normal to the indicatrix hypersurface) for the vector field $X=fC$ it follows that
\begin{equation}
\int_K f \, d\mu=\frac{1}{n}\int_{\partial K}f\, \mu,
\end{equation}
where 
$$\mu=\iota_C d\mu=\sqrt{\det g_{ij}}\ \sum_{i=1}^n (-1)^{i-1} y^i dy^1\wedge\ldots\wedge dy^{i-1}\wedge dy^{i+1}\ldots \wedge dy^n$$
 denotes the induced volume form on the indicatrix hypersurface. Using integration we can introduce the following average inner products
\begin{equation}
\gamma_1(v,w):=\int_{\partial K}g(v,w)\, \mu \ \ \textrm{and}\ \ \gamma_2(v,w)=\int_{\partial K} m(v,w)\, \mu_p
\end{equation}
on the vector space, where
$$m(v,w)=g(v,w)-(V L)(W L)$$
is the angular metric tensor and
$$V=v^1\frac{\partial}{\partial y^1}+\ldots+v^n\frac{\partial}{\partial y^n}\ \ \textrm{and}\ \ W=w^1\frac{\partial}{\partial y^1}+\ldots+w^n\frac{\partial}{\partial y^n}.$$
Furthermore
$$\gamma_3(v,w)=\gamma_1(v,w)-\gamma_2(v,w)=\int_{\partial K} (V L)(W L)\, \mu_p.$$
The basic version $\gamma_1$ of the average inner products was introduced in [16]. For further processes to construct inner products by averaging we can refer to [3]. In what follows we take one more step forward by introducing associated Randers-Minkowski functionals. The linear form of the perturbation will play a crucial role in the applications (see Funk metrics and Brickell's theorem).

\begin{Prop}
The linear isometry group of the Minkowski vector space is a subgroup of the orthogonal group with respect to each of the average inner products $\gamma_1$, $\gamma_2$ and $\gamma_3$.
\end{Prop}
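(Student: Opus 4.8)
The plan is to show that every linear map $\Phi$ preserving the Minkowski functional $L$ automatically preserves each of the bilinear forms $\gamma_1$, $\gamma_2$, $\gamma_3$; since these are positive definite inner products by construction, this displays the linear isometry group as a subgroup of each associated orthogonal group. So fix a linear isometry, i.e. a linear $\Phi$ with $L\circ \Phi =L$, and write it as a matrix $(\Phi^i_j)$. First I would extract the infinitesimal consequences of $L\circ\Phi=L$. Differentiating $E\circ \Phi=E$ (with $E=\tfrac12 L^2$) twice by the chain rule yields the covariance relation $g_y(v,w)=g_{\Phi y}(\Phi v,\Phi w)$, and differentiating $L\circ\Phi=L$ once gives $(dL)_y(v)=(dL)_{\Phi y}(\Phi v)$, so the angular metric obeys $m_y(v,w)=m_{\Phi y}(\Phi v,\Phi w)$ as well. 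In other words $\Phi$ carries the fibre data at $y$ to the fibre data at $\Phi y$; moreover $\Phi(\partial K)=\partial K$ since $L$ is preserved.

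The next step, which I expect to be the main obstacle, is to show that $\Phi$ preserves the induced volume $\mu=\iota_C\, d\mu$ on the indicatrix. Taking determinants in the matrix form $g(y)=\Phi^{\top} g(\Phi y)\,\Phi$ of the covariance relation gives $\sqrt{\det g(y)}=|\det \Phi|\,\sqrt{\det g(\Phi y)}$. Combining this with $\Phi^*(dy^1\wedge\cdots\wedge dy^n)=(\det \Phi)\, dy^1\wedge\cdots\wedge dy^n$ shows that $\Phi^* d\mu=\textrm{sgn}(\det\Phi)\, d\mu$, so $\Phi$ preserves the Riemannian volume form as a measure, with no need to normalise $\det\Phi$. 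Because $\Phi$ is linear, the Liouville field is $\Phi$-related to itself, i.e. $\Phi_* C=C$, and hence the contraction commutes with the pullback: $\Phi^*\mu=\Phi^*(\iota_C d\mu)=\iota_C(\Phi^* d\mu)=\textrm{sgn}(\det\Phi)\,\mu$. Thus the induced measure on $\partial K$ is invariant.

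Finally I would change variables $y=\Phi z$ in the defining integrals, using measure invariance together with the covariance relations from the first step:
$$\gamma_1(\Phi v,\Phi w)=\int_{\partial K} g_y(\Phi v,\Phi w)\,\mu=\int_{\partial K} g_{\Phi z}(\Phi v,\Phi w)\,\mu=\int_{\partial K} g_z(v,w)\,\mu=\gamma_1(v,w),$$
and the identical computation with $m$ in place of $g$ yields the invariance of $\gamma_2$. Then $\gamma_3=\gamma_1-\gamma_2$ is invariant as well, or one argues directly with the integrand $(VL)(WL)$ using $(dL)_y(v)=(dL)_{\Phi y}(\Phi v)$. Since each $\gamma_i$ is positive definite, the invariance $\gamma_i(\Phi v,\Phi w)=\gamma_i(v,w)$ places $\Phi$ in the corresponding orthogonal group, which is the assertion. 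The only genuinely delicate point is the bookkeeping in the volume step, namely keeping the Jacobian factors $|\det \Phi|$ and $\det \Phi$ straight and verifying that $\iota_C$ commutes with $\Phi^*$; everything else is a direct substitution.
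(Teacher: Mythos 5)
Your proof is correct, and its skeleton coincides with the paper's: differentiate $L\circ\Phi=L$ and $E\circ\Phi=E$ to obtain the covariance relations $g_y(v,w)=g_{\Phi y}(\Phi v,\Phi w)$ and $(dL)_y(v)=(dL)_{\Phi y}(\Phi v)$ (hence the same for the angular metric $m$), then conclude by a change of variables over the invariant indicatrix. Where you genuinely diverge is the treatment of the volume element, which is the real technical content here. The paper first proves $\det\Phi=\pm 1$ by comparing Euclidean volumes of the invariant body $K$, and then avoids the hypersurface form altogether: it converts $\int_{\partial K}\cdot\,\mu$ into $n\int_K\cdot\,d\mu$ via the divergence-theorem identity (2), performs an ordinary change of variables in $\mathbb{R}^n$, and converts back, normalizing the two factors $\sqrt{\det g_{ij}}\circ\Phi^{-1}$ and $|\det\Phi^{-1}|$ to $1$ separately by unimodularity. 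You instead pull back the induced $(n-1)$-form $\mu=\iota_C\,d\mu$ directly on $\partial K$, using $\Phi_*C=C$ and $\Phi^*(dy^1\wedge\cdots\wedge dy^n)=(\det\Phi)\,dy^1\wedge\cdots\wedge dy^n$ to get $\Phi^*\mu=\mathrm{sgn}(\det\Phi)\,\mu$; this makes visible that the Jacobian factor and the transformation of $\sqrt{\det g_{ij}}$ cancel identically, so unimodularity of $\Phi$ never needs to be invoked (it is of course true, and the paper's one-line volume argument gives it). The residual sign $\mathrm{sgn}(\det\Phi)$ is handled correctly in your argument because you integrate $\mu$ as a measure; equivalently, an orientation-reversing $\Phi$ also reverses the orientation of $\partial K$, producing a second compensating sign in the oriented change-of-variables formula. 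In short, your route is slightly leaner (one lemma fewer) at the price of form and orientation bookkeeping on the hypersurface, which you carry out correctly, while the paper trades that bookkeeping for the solid-body identity (2) plus the unimodularity observation.
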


\begin{Pf}
Suppose that the linear transformation $\Phi\colon \mathbb{R}^n\to \mathbb{R}^n$ preserves the Minkowskian lenght of the elements in the space. First of all compute the Euclidean volume of $K$:
$$\textrm{vol}_E (K)=\int_K 1=\int_{\Phi(K)}|\det \Phi^{-1}|=|\det \Phi^{-1}| \ \textrm{vol}_E (K)$$
because $K$ is invariant under $\Phi$. Therefore 
$$\det \Phi=\pm 1.$$
Since the energy is also invariant under $\Phi$ we have, by differentiating the relationships 
$$L(\Phi)=L\ \ \ \textrm{and}\ \ \ E(\Phi)=E,$$
respectively, 
that 
$$M_i^k \frac{\partial L}{\partial y^k}= \frac{\partial L}{\partial y^i}\circ \Phi^{-1}\ \ \textrm{and}\ \ M_i^k \frac{\partial E}{\partial y^k}= \frac{\partial E}{\partial y^i}\circ \Phi^{-1},\ \ \textrm{where}\ \ M_i^k=\frac{\partial \Phi^k}{\partial y^i}$$
denotes the matrix of the linear transformation. The second order partial derivatives give that $\Phi$ is an isometry with respect to both the Riemann-Finsler metric and the angular metric tensor: 
$$\ \ M_i^kM_j^lg_{kl}=g_{ij}\circ \Phi^{-1}\ \ \ \textrm{and}\ \ \ M_i^kM_j^lm_{kl}=m_{ij}\circ \Phi^{-1}.$$
Since the absolute value of $\det \Phi$ is $1$ we have that $\det g_{ij}=\det g_{ij}\circ \Phi^{-1}.$
Finally
$$\gamma_1(v,w):=\int_{\partial K}g(v,w)\, \mu=n\int_{K}g(v,w)\, d\mu=n\int_{K}g(v,w) \sqrt{\det g_{ij}}=$$
$$=n\int_{\Phi(K)}g(v,w)\circ \Phi^{-1}\sqrt{\det g_{ij}}\circ \Phi^{-1}|\det \Phi^{-1}|=$$
$$=n\int_{K}g(\Phi(v), \Phi(w))\sqrt{\det g_{ij}}=n\int_{K}g(\Phi(v), \Phi(w))\, d\mu=$$
$$=\int_{\partial K}g(\Phi(v),\Phi(w))\, \mu=\gamma_1(\Phi(v),\Phi(w)).$$
The computation is similar in cases of both the angular metric tensor and the difference metric $\gamma_3=\gamma_1-\gamma_2$.
\end{Pf}

\begin{Cor} If the linear isometry group of the Minkowski vector space is irreducible then there exists a number $\ 0<\lambda < 1$ such that
$\gamma_2=\lambda \gamma_1$ and, consequently, 
$$\gamma_3=(1-\lambda)\gamma_1 \ \ \textrm{and}\ \ \gamma_2=\frac{\lambda}{1-\lambda}\gamma_3.$$
\end{Cor}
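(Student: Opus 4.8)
The plan is to convert the statement into linear algebra through the positive definite form $\gamma_1$ and then exploit irreducibility via Schur's lemma. First I would record the two facts already at hand: $\gamma_1$ is positive definite, being the average of the positive definite metric $g$, and, by the preceding Proposition, every element $\Phi$ of the linear isometry group $G$ is orthogonal with respect to both $\gamma_1$ and $\gamma_2$, that is, $\gamma_i(\Phi v,\Phi w)=\gamma_i(v,w)$ for $i=1,2$. In particular the $\gamma_1$-adjoint of each $\Phi\in G$ is $\Phi^{-1}$.

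Next I would introduce the operator $A\colon \mathbb{R}^n\to\mathbb{R}^n$ determined by $\gamma_2(v,w)=\gamma_1(Av,w)$; it exists and is unique because $\gamma_1$ is nondegenerate, and it is $\gamma_1$-self-adjoint because $\gamma_2$ is symmetric. The simultaneous $G$-invariance of $\gamma_1$ and $\gamma_2$ forces $A$ to commute with every $\Phi\in G$, since for all $v,w$ one computes $\gamma_1(\Phi A\Phi^{-1}v,w)=\gamma_1(A\Phi^{-1}v,\Phi^{-1}w)=\gamma_2(\Phi^{-1}v,\Phi^{-1}w)=\gamma_2(v,w)=\gamma_1(Av,w)$, whence $\Phi A\Phi^{-1}=A$ by nondegeneracy of $\gamma_1$.

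Then comes the main point, which I expect to carry the real weight of the argument. Because $A$ is self-adjoint for the positive definite $\gamma_1$, it is diagonalizable over $\mathbb{R}$ and its eigenspaces are mutually $\gamma_1$-orthogonal; and because $A$ commutes with the $G$-action, each eigenspace is a $G$-invariant subspace of $\mathbb{R}^n$ (if $Av=\mu v$ then $A(\Phi v)=\Phi A v=\mu\,\Phi v$). Irreducibility then leaves a single eigenspace, equal to the whole space, so $A=\lambda\,\mathrm{id}$ for one real number $\lambda$, i.e. $\gamma_2=\lambda\gamma_1$. This is precisely the real, self-adjoint version of Schur's lemma, and stating it cleanly is the delicate part.

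Finally I would pin down the range $0<\lambda<1$ by positivity and read off the remaining formulas. On the indicatrix one has $VL=g(C,V)$ and $g(C,C)=1$, so the Cauchy--Schwarz inequality for $g$ gives $(VL)^2\le g(v,v)$; hence the angular tensor satisfies $m(v,v)=g(v,v)-(VL)^2\ge 0$, with equality only where $V$ is proportional to the Liouville field $C$. Integrating, $\gamma_2(v,v)>0$ and $\gamma_3(v,v)=\int_{\partial K}(VL)^2\,\mu>0$ for every $v\neq\mathbf{0}$, because neither integrand can vanish identically on the whole indicatrix for a fixed nonzero $v$; thus both $\gamma_2$ and $\gamma_3=\gamma_1-\gamma_2$ are positive definite. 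Comparing with $\gamma_2=\lambda\gamma_1$ and $\gamma_3=(1-\lambda)\gamma_1$ forces $\lambda>0$ and $1-\lambda>0$, and the identities $\gamma_3=(1-\lambda)\gamma_1$ and $\gamma_2=\tfrac{\lambda}{1-\lambda}\gamma_3$ follow by direct substitution.
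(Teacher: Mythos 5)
Your proof is correct, and it is exactly the argument the paper intends: the Corollary is stated as an immediate consequence of Proposition 1 (invariance of $\gamma_1$, $\gamma_2$, $\gamma_3$ under the isometry group) via the real self-adjoint form of Schur's lemma, with the bounds $0<\lambda<1$ coming from the strict Cauchy--Schwarz inequality $g^2(C,V)\leq g(C,C)\,g(V,V)$ that the paper itself invokes in Section 2/VI. You have simply written out in full the details the paper leaves implicit, so there is nothing to correct.
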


\vspace{0.2cm}
\noindent
IV. Using Lemma 1 we have the following practical method to calculate integrals of the form $\int_{\partial K} f\mu$, where the integrand $f$ is a zero homogeneous function. Consider the mapping
$$v \mapsto T(v):=\varphi(v)v,\ \ \textrm{where}\ \ \varphi(v)=\frac{|v|}{L(v)},$$ 
as a diffeomorphism (away from the origin); $|v|$ denotes (for example) the usual Euclidean norm of vectors in $\mathbb{R}^n$. Then
$$\int_{ K} f\, d \mu=\int_{T^{-1}(K)}\varphi^nf\circ \varphi \sqrt{\det g_{ij}}\circ \varphi=\int_{B}\varphi^nf\, d\mu$$
because of the zero homogenity of the integrand. Therefore
\begin{equation}
\int_{B}\varphi^nf\, d\mu=\int_{ K} f\, d \mu
\end{equation}
and, by equation (2),
\begin{equation}
\int_{\partial B}\varphi^nf\, \mu=\int_{\partial K} f\, \mu.
\end{equation} 

\vspace{0.2cm}
\noindent
V. As one of further associated objects consider the linear functional 
$$\beta(v):=\int_{\partial K} VL\, \mu, \ \ \textrm{where}\ \ V=v^1\frac{\partial}{\partial y^1}+\ldots+v^n\frac{\partial}{\partial y^n}.$$

\begin{Rem}\emph{
Using the divergence theorem we have that
$$\int_K \textrm{div}\ \big(LV-(VL)C\big)\, d\mu=0$$
because the vector field $LV-(VL)C$ is tangential to the indicatrix hypersurface:
$$\big(LV-(VL)C\big)L=L(VL)-(VL)L=0.$$
Since $C(VL)=0$
$$\int_K \textrm{div}\ (LV)\, d\mu=\int_K \textrm{div}\ ((VL)C)\, d\mu=n\int_K VL \, d\mu \stackrel{(2)}{=}\int_{\partial K} VL\,  \mu.$$
On the other hand 
$$\textrm{div}\ (LV)=VL+Lv^iC_i,$$
where $C_i=g^{jk}C_{ijk}$.
Since $\textrm{div}\ (LV)$ is a zero homogeneous function
$$n\int_{K} \textrm{div}\ (LV)\, d\mu=\int_{\partial K} \textrm{div}\ (LV)\, \mu=\int_{\partial K} VL+Lv^iC_i\, \mu$$
and thus
$$v^i\int_{\partial K} LC_i\, \mu=(n-1)\int_{\partial K} VL\, \mu=(n-1)\beta(v).$$}
\end{Rem}

\begin{Def}
The indicatrix body $K$ is called \emph{balanced} if the assotiated linear functional $\beta$ is identically zero.
\end{Def}

\begin{Prop}
If the linear isometry group of the Minkowski vector space is irreducible then the indicatrix body is balanced.
\end{Prop}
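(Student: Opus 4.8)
The plan is to show that $\beta$ is a linear functional that is \emph{invariant} under the isometry group, and then to use irreducibility to force it to vanish. Concretely, I would first prove the transformation rule $\beta(\Phi(v))=\beta(v)$ for every linear isometry $\Phi$, reusing the first-order identities collected in the proof of Proposition 1. Writing $M_i^k=\partial \Phi^k/\partial y^i$ for the constant matrix of $\Phi$, the vector field associated with $\Phi(v)$ has components $M_j^k v^j$, so its directional derivative of $L$ is
$$\sum_{k} M_j^k v^j \frac{\partial L}{\partial y^k}=v^j\left(\frac{\partial L}{\partial y^j}\circ \Phi^{-1}\right)=(VL)\circ \Phi^{-1},$$
where I have used the relation $M_i^k \partial L/\partial y^k=(\partial L/\partial y^i)\circ\Phi^{-1}$. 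Since each $\partial L/\partial y^j$ is positively homogeneous of degree zero, both $VL$ and $(VL)\circ\Phi^{-1}$ are zero-homogeneous, so equation (2) is available for each of them.

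Next I would carry out the change of variables exactly as for $\gamma_1$ in Proposition 1: pass from $\beta(\Phi(v))=\int_{\partial K}(VL)\circ\Phi^{-1}\,\mu$ to the solid integral $n\int_K (VL)\circ\Phi^{-1}\,d\mu$, then apply the substitution induced by $\Phi$. Here the three ingredients are $\det\Phi=\pm1$, the invariance $\sqrt{\det g_{ij}}=(\sqrt{\det g_{ij}})\circ\Phi^{-1}$ (which follows from $M_i^kM_j^l g_{kl}=g_{ij}\circ\Phi^{-1}$ together with $|\det\Phi|=1$), and the invariance $\Phi^{-1}(K)=K$ of the body. These return $n\int_K VL\,d\mu=\int_{\partial K}VL\,\mu=\beta(v)$, establishing $\beta\circ\Phi=\beta$ on all of $\mathbb{R}^n$.

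Finally comes the representation-theoretic step. The identity $\beta(\Phi(v))=\beta(v)$ shows that $\ker\beta$ is invariant under every $\Phi$ in the isometry group: if $\beta(v)=0$ then $\beta(\Phi(v))=0$ as well. Were $\beta$ not identically zero, $\ker\beta$ would be a proper nonzero invariant subspace of dimension $n-1$, contradicting the assumed irreducibility of the group action. Hence $\beta\equiv0$, i.e. $K$ is balanced. I expect the main obstacle to be purely computational — the first step, where one must track the pre- and post-composition with $\Phi^{-1}$ so that the substitution closes up precisely as in Proposition 1; once the invariance $\beta\circ\Phi=\beta$ is secured, the conclusion is immediate from the kernel argument.
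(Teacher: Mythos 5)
Your proposal is correct and follows essentially the same route as the paper: establish the invariance $\beta\circ\Phi=\beta$ by the same change of variables used for $\gamma_1$ (with $|\det\Phi|=1$, the invariance of $\sqrt{\det g_{ij}}$, and the invariance of $K$), then conclude from irreducibility that the invariant subspace $\ker\beta$ forces $\beta\equiv 0$. The only cosmetic difference is that you run the substitution from $\beta(\Phi(v))$ back to $\beta(v)$, whereas the paper computes forward from $\beta(v)$ to $\beta(\Phi(v))$.
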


\begin{Pf}
Using the previous notations
$$\beta(v)=\int_{\partial K} VL\, \mu=n\int_{K} VL\, d\mu=n\int_{K} VL \sqrt{\det g_{ij}}=$$
$$=n\int_{\Phi(K)}v^i\frac{\partial L}{\partial y^i} \circ \Phi^{-1}\sqrt{\det g_{ij}}\circ \Phi^{-1}|\det \Phi^{-1}|=n\int_{K}v^k M_k^i\frac{\partial L}{\partial y^i}\sqrt{\det g_{ij}}=$$
$$=n\int_{K}v^k M_k^i\frac{\partial L}{\partial y^i}\, d\mu=\int_{\partial K}v^k M_k^i\frac{\partial L}{\partial y^i}\, \mu=\beta(\Phi(v)).$$
Therefore the null-space of the linear functional $\beta$ is an invariant subspace under the linear isometry group of the Minkowski vector space. Since the group is irreducible we have that $\beta=0$ as was to be stated.
\end{Pf}

\vspace{0.2cm}
\noindent
VI. Since
$$\beta^2(v)=\left(\int_{\partial K} VL \, \mu\right)^2\leq \int_{\partial K} 1\, \mu\ \int_{\partial K}(VL)^2\, \mu=\textrm{Area}\ (\partial K)\int_{\partial K}(VL)^2\, \mu$$
and
$$(VL)^2=\frac{1}{\ L^2}\ g^2(C,V)\leq \frac{1}{L^2}\ g(C,C)\ g(V,V)=g(V,V)$$
we have that 
$$\beta^2(v)\leq  \textrm{Area}\ (\partial K) \ \gamma_3(v,v) \leq  \textrm{Area}\ (\partial K) \ \gamma_1(v,v).$$
The inequalities are strict because $C$ and $V$ are linearly independent away from the points $v$ and $-v$. Introducing the weighted inner products
$$\Gamma_i(v,w)=\frac{1}{\textrm{Area}\ (\partial K)}\gamma_i(v,w)$$
($i=1, 2, 3$) we can write that
$$\left(\frac{\beta}{\textrm{Area}\ (\partial K)}\right)^2(v)< \Gamma_3(v,v) < \Gamma_1(v,v).$$
Therefore
$$\textrm{the supremum norm of the weighted linear functional}\ \frac{\beta}{\textrm{Area}\ (\partial K)}  <1$$
with respect to each of the unit balls of $\Gamma_1$ and $\Gamma_{3}$. 

\begin{Cor} The functionals
$$L_1(v):=\sqrt{\Gamma_1(v,v)}+\frac{\beta(v)}{\textrm{Area}\ (\partial K)} \ \ \textrm{and}\ \ L_3(v):=\sqrt{\Gamma_3(v,v)}+\frac{\beta(v)}{\textrm{Area}\ (\partial K)}$$
are Finsler-Minkowski functionals which are called associated Randers-Min\-kowski functionals to the Minkowski vector space. 
\end{Cor}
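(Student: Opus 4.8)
The plan is to recognize both $L_1$ and $L_3$ as Randers-type functionals of the shape $\alpha_i+\beta/\mathrm{Area}(\partial K)$, where $\alpha_i(v):=\sqrt{\Gamma_i(v,v)}$ is the Euclidean norm attached to the \emph{constant} positive definite inner product $\Gamma_i$, and then to check the three requirements hidden in the Definition (degree-one homogeneity, $\mathcal{C}^4$-regularity away from the origin, and positive definiteness of the energy Hessian), with the strict inequality of part VI entering precisely at the positivity step. First I would dispose of the elementary items. Each $L_i$ is positively homogeneous of degree one, since $\alpha_i(tv)=t\,\alpha_i(v)$ and $\beta(tv)=t\,\beta(v)$ for $t>0$. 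Writing $L_i=\alpha_i\bigl(1+(\beta/\mathrm{Area}(\partial K))/\alpha_i\bigr)$ and using that the supremum norm of $\beta/\mathrm{Area}(\partial K)$ with respect to the $\Gamma_i$-unit ball is strictly less than $1$ (part VI), homogeneity yields $|\beta(v)/\mathrm{Area}(\partial K)|<\alpha_i(v)$ for every $v\neq\mathbf{0}$, hence $L_i(v)>0$ away from the origin. Regularity is equally direct: $v\mapsto\Gamma_i(v,v)$ is a strictly positive quadratic polynomial off the origin, so its square root is real analytic there, and the linear term is smooth everywhere; thus $L_i$ is of class $\mathcal{C}^\infty$, in particular $\mathcal{C}^4$, on a neighbourhood of any non-zero vector.

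The computational core is the fundamental tensor of a constant-coefficient Randers functional $L=\alpha+\beta$, with $\alpha=\sqrt{a_{ij}y^iy^j}$, $\beta=b_iy^i$ and $a_{ij},b_i$ constant. Setting $l_i:=y_i/\alpha$, so that $l$ is an $a$-unit covector ($a^{ij}l_il_j=1$) with $\partial l_i/\partial y^j=(a_{ij}-l_il_j)/\alpha$, and differentiating $E=\tfrac12L^2$ twice, I expect to arrive at the classical expression
$$g_{ij}=\frac{L}{\alpha}\bigl(a_{ij}-l_il_j\bigr)+(l_i+b_i)(l_j+b_j),$$
where one reads $a_{ij}=\Gamma_i$ and $b_i$ as the components of $\beta/\mathrm{Area}(\partial K)$. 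This step is purely mechanical (repeated use of $\partial\alpha/\partial y^i=l_i$), and the constancy of $a_{ij}$ and $b_i$ removes all derivative-of-coefficient terms, so no difficulty is anticipated here.

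The decisive step is positive definiteness. Contracting with an arbitrary $\xi\neq\mathbf{0}$ gives
$$g_{ij}\xi^i\xi^j=\frac{L}{\alpha}\Bigl(a_{ij}\xi^i\xi^j-(l_k\xi^k)^2\Bigr)+\bigl((l_m+b_m)\xi^m\bigr)^2,$$
and I would argue that both summands are non-negative: the second is a perfect square, while in the first one has $a_{ij}\xi^i\xi^j\ge(l_k\xi^k)^2$ by Cauchy--Schwarz (because $l$ is an $a$-unit covector) and $L/\alpha>0$ exactly by the positivity of $L$ established above. For strictness, if the sum vanishes then equality in Cauchy--Schwarz forces $\xi=\lambda y$, whence $(l_m+b_m)\xi^m=\lambda(\alpha+\beta)=\lambda L$, and $L>0$ gives $\lambda=0$, i.e. $\xi=\mathbf{0}$, a contradiction. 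Hence $g_{ij}$ is positive definite for both $i=1$ and $i=3$, the argument being insensitive to which constant inner product $\Gamma_i$ is used. Finally, positive definiteness of the energy Hessian is exactly the strong convexity of the indicatrix $L_i^{-1}(1)$; together with the coercivity $L_i(v)\ge c\,|v|$ for some $c>0$ (again a consequence of the bound $<1$) this makes $\{L_i\le1\}$ a compact convex body with the origin in its interior, so each $L_i$ is genuinely the Minkowski functional of such a body and therefore a Finsler--Minkowski functional. The main obstacle is organising the quadratic form into the perfect square plus the Cauchy--Schwarz remainder and handling the equality case cleanly; the conceptual heart is the observation that the part VI inequality is precisely the hypothesis guaranteeing $L/\alpha>0$ everywhere.
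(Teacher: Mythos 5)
Your proposal is correct and follows the route the paper intends: the corollary is stated there without an explicit proof, as an immediate consequence of the strict inequality of part VI (the supremum norm of $\beta/\textrm{Area}\,(\partial K)$ being $<1$ with respect to the $\Gamma_1$- and $\Gamma_3$-unit balls) combined with the classical positive-definiteness criterion for Randers-type functionals, which is precisely what you verify. Your fundamental-tensor formula $g_{ij}=\frac{L}{\alpha}\left(a_{ij}-l_il_j\right)+(l_i+b_i)(l_j+b_j)$ for constant $a_{ij}$, $b_i$, and the decomposition into a perfect square plus a Cauchy--Schwarz remainder with the equality-case analysis, constitute the standard proof of that criterion (cf. reference [1]), so your write-up supplies exactly the details the paper delegates to the literature.
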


\begin{Thm} \emph{(\bf{The main theorem}).}
If $L$ is a Finsler-Minkowski functional with a balanced indicatrix body of dimension at least three and the Lévi-Civita connection $\nabla$ has zero curvature then $L$ is a norm coming from an inner product, i.e. the Minkowski vector space reduces to a Euclidean one.
\end{Thm}

The original version was proved by F. Brickell [2] using the stronger condition of absolute homogenity (the symmetry of $K$ with respect to the origin) instead of the balanced indicatrix body. The preparations of the proof involve the theory of the Funk spaces and the adaptation of the setting of average objects to Finsler spaces in general. 

\section{Finsler spaces}

\noindent
I. Let $M$ be a differentiable manifold with local coordinates $u^1$, ..., $u^n$ on $U\subset M$. The induced coordinate system on the tangent manifold consists of the functions 
$$x^1:=u^1\circ \pi, \ldots, x^n=u^n\circ \pi\ \ \textrm{and}\ \ y^1:=du^1, \ldots,y^n=du^n,$$ 
where $\pi \colon TM\to M$ is the canonical projection. A Finsler structure on a differentiable manifold $M$ is a smoothly varying family $F\colon TM\to \mathbb{R}$ of Finsler-Minkowski functionals in the tangent spaces satisfying the following conditions:
\begin{itemize}
\item each non-zero element $v\in TM$ has an open neighbourhood such that the restricted function is of class at least $\ \mathcal{C}^4$ in all of its variables $x^1$, ..., $x^n$ and $y^1$, ..., $y^n$,
\item the Hessian matrix of the energy function $E:=(1/2)F^2$ with respect to the variables $y^1, \ldots, y^n$ is positive definite. 
\end{itemize}

\vspace{0.2cm}
\noindent
II. Let $f\colon TM\to \mathbb{R}$ be a zero homogeneous function and let us define the average-valued function
$$A_f(p):=\int_{\partial K_p} f\,  \mu_p,$$
where $\partial K_p$ is the indicatrix hypersurface belonging to the Finsler-Minkow\-ski functional of the tangent space. Integration is taken with respect to the orientation induced by the coordinate vector fields $\partial/\partial y^1, \ldots, \partial/\partial y^n$. It can be easily seen that the integral
$$\int_{K}f\, d\mu_p=\int_{y(K)}f\circ y^{-1}\sqrt{\det g_{ij}}\circ y^{-1}\, dy^1\ldots dy^n$$
is independent of the choice of the coordinate system (orientation). Actually, the orientation is convenient but not necessary to make integrals of functions sense [18].

\noindent
In what follows we compute the partial derivatives of the function $A_f$. Following IV in section 2 we choose an auxiliary Euclidean structure at each point in the coordinate neighbourhood $U$ induced by the coordinate vector fields as an orthonormal system of tangent vectors. Since it is an Euclidean structure there is no need to distinguish the different base points. Using equations (2), (4) and (5)
$$A_f=n\int_{B} \varphi^nf \sqrt{\det g_{ij}}\, dy,\ \ \textrm{where}\ \ \varphi=\frac{\sqrt{(y^1)^2+\ldots+(y^n)^2}}{F}.$$
Therefore
$$\frac{\partial A_f}{\partial u^i}_p=n\int_{B} \frac{\partial}{\partial x^i}\left(\varphi^nf\sqrt{\det g_{mn}}\right)\, dy=$$
$$=n\int_{B} \varphi^n\left(-nf \frac{\partial \ln F}{\partial x^i}+\frac{\partial f}{\partial x^i}+f\frac{\partial \ln \sqrt{\det g_{mn}}}{\partial x^i}\right)\sqrt{\det g_{mn}}\, dy$$
$$=\int_{\partial K_p}-nf \frac{\partial \ln F}{\partial x^i}+\frac{\partial f}{\partial x^i}+f\frac{\partial \ln \sqrt{\det g_{mn}}}{\partial x^i}\, \mu_p$$
using equations (2), (4) and (5) again. Differentiating $\det g_{mn}$ as the composition of the function $D:=\det$ and the usual coordinates of the matrix $M:=g_{ij}$ we have that
$$
\frac{\partial \det g_{mn}}{\partial x^i}=\frac{\partial D}{\ \partial g_{mn}} (M) \frac{\partial g_{mn}}{\partial x^i}=$$
$$=(-1)^{m+n}\det \left(M \ \textrm{ without its $m^{th}$ row and $n^{th}$ column}\right)\frac{\partial g_{mn}}{\partial x^i}=
$$
$$=(\det g_{ij}) g^{mn}\frac{\partial g_{mn}}{\partial x^i}$$
and, consequently,
\begin{equation}
\frac{\partial \ln \sqrt{\det g_{mn}}}{\partial x^i}=\frac{1}{2} g^{mn}\frac{\partial g_{mn}}{\partial x^i}
\end{equation}
The last step is to divide the partial derivatives of the integrand into vertical and horizontal parts [1], see also [5] and [6].

\vspace{0.2cm}
\noindent
III. Using compatible collections  $G_i^k$ of functions on local neighbourhoods of the tangent manifold let us define the (horizontal) vector fields
\begin{equation}
X_i^h=\frac{\partial}{\ \partial x^i}-G_i^k\frac{\partial}{\ \partial y^k}.
\end{equation}
\begin{Def} 
The horizontal distribution $h$ is a collection of subspaces spanned by the vectors $X_i^h$ as the base point runs through the non-zero elements of the tangent manifold. If the functions $G_i^k$ are positively homogeneous of degree $1$ then the distribution is called \emph{homogeneous}. In case of
\begin{equation}
\frac{\partial G_i^k}{\partial y^j}=\frac{\partial G_j^k}{\partial y^i}
\end{equation}
we say that $h$ is \emph{torsion-free}. The horizontal distribution is \emph{conservative} if the derivatives of $F$ vanishes into the horizontal directions 
\end{Def}
It is well-known from the calculus that (8) is a necessary and sufficient condition for the existence of functions $G^k$ such that
$$\frac{\partial G^k}{\partial y^i}=G_i^k.$$ 
Using (7) the integrand can be written into the form
$$-nf \frac{\partial \ln F}{\partial x^i}+\frac{\partial f}{\partial x^i}+f\frac{\partial \ln \sqrt{\det g_{mn}}}{\partial x^i}=-nf X_i^h ln F+X_i^h f+$$
$$+f\frac{1}{2}g^{mn}X_i^hg_{mn}-nfG_i^k\frac{\partial \ln F}{\partial y^k}+G_i^k\frac{\partial f}{\partial y^k}+fG_i^k\overbrace{\frac{1}{2}g^{mn}\frac{\partial g_{mn}}{\partial y^k}}^{{\mathcal{C}_k}}.$$
On the other hand
$$\textrm{div}\ \left(f\left(G_i^k\frac{\partial}{\partial y^k}-G_i^k\frac{\partial \ln F }{\partial y^k}C\right)\right)=f\textrm{div}\ \left(G_i^k\frac{\partial}{\partial y^k}-G_i^k\frac{\partial \ln F }{\partial y^k}C\right)+G_i^k\frac{\partial f}{\partial y^k},$$
where
$$\textrm{div}\ \left(G_i^k\frac{\partial}{\partial y^k}-G_i^k\frac{\partial \ln F }{\partial y^k}C\right)=G_i^k\textrm{div} \frac{\partial}{\ \partial y^k}+\frac{\partial G_i^k}{\partial y^k}-G_i^k\frac{\partial \ln F }{\partial y^k}\ \textrm{div}\ C=$$
$$=G_i^k\mathcal{C}_k+\frac{\partial G_i^k}{\partial y^k}-nG_i^k\frac{\partial \ln F }{\partial y^k}.$$
Using the divergence theorem
$$\int_{\partial K_p} \textrm{div}\ \left(f\left(G_i^k\frac{\partial}{\partial y^k}-G_i^k\frac{\partial \ln F }{\partial y^k}C\right)\right)\, \mu_p=0$$
and thus
$$\frac{\partial A_f}{\partial u^i}=\int_{\partial K_p}-nf X_i^h ln F+X_i^h f+
f\frac{1}{2}g^{mn}X_i^hg_{mn}+f\frac{\partial G_i^k}{\partial y^k}\, \mu_p$$
In terms of index-free expressions 
\begin{equation}
XA_f=\int_{\partial K_p}-nf X^h ln F+X^h f+f\tilde{\mathcal{C}}^{'}(X^c)\, \mu_p,
\end{equation}
where $\tilde{\mathcal{C}}^{'}$ is the semibasic trace of the second Cartan tensor
$$g(\mathcal{C}'(X_i^c,X_j^c),X_k^v)=$$
$$=\frac{1}{2}\left(X_i^h g(X_j^v,X_k^v)-g([ X_i^h,X_j^v],X_k^v)-g(X_j^v,[ X_i^h,X_k^v])\right)$$
associated to $h$, 
 $X^v$, $X^c$ and $X^h$ are the vertical, complete and horizontal lifts of the vector field $X$ on the base manifold. Especially 
$$X_i^v:=\frac{\partial }{\ \partial y^i},\ \ X_i^c:=\frac{\partial }{\ \partial x^i}\ \ \textrm{and}\ \ X_i^h=\frac{\partial}{\ \partial x^i}-G_i^k\frac{\partial}{\ \partial y^k}.$$

\begin{Cor}
If the horizontal distribution is conservative then we have
the reduced formula
\begin{equation}
XA_f=\int_{\partial K_p}X^h f+f\tilde{\mathcal{C}^{'}}(X^c)\, \mu_p.
\end{equation}
\end{Cor}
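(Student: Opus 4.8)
The plan is to obtain the reduced formula (11) as an immediate specialization of the general formula (9), which has already been established. Formula (9) expresses the directional derivative of the average-valued function $A_f$ as
$$XA_f=\int_{\partial K_p}\bigl(-nf\, X^h\ln F+X^h f+f\,\tilde{\mathcal{C}}^{'}(X^c)\bigr)\, \mu_p,$$
so the only task is to show that the leading term under the integral sign drops out precisely when the horizontal distribution $h$ is conservative.

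First I would unwind the definition of a conservative horizontal distribution. By Definition 5 this means that the derivatives of $F$ vanish into the horizontal directions, i.e. $X^h F=0$ for every horizontal lift. Since $F$ is strictly positive on the non-zero tangent vectors (it is a Finsler-Minkowski functional), the chain rule gives
$$X^h\ln F=\frac{X^h F}{F}=0.$$
Hence the factor $X^h\ln F$ appearing in the first term of (9) vanishes identically along each indicatrix $\partial K_p$.

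Substituting $X^h\ln F=0$ into (9) annihilates the contribution $-nf\,X^h\ln F$ and leaves exactly
$$XA_f=\int_{\partial K_p}\bigl(X^h f+f\,\tilde{\mathcal{C}}^{'}(X^c)\bigr)\, \mu_p,$$
which is the asserted formula (11). There is no genuine obstacle here: the analytic content—the divergence-theorem manipulations and the splitting of the partial derivatives into vertical and horizontal parts—was already carried out in the derivation of (9). The corollary is thus a purely algebraic simplification, and the only point deserving (minimal) care is the identification of conservativity with the vanishing of $X^h\ln F$ rather than of $X^h F$ itself, which is legitimate precisely because $F>0$ away from the zero section.
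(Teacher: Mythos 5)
Your proposal is correct and matches the paper's intent exactly: the corollary is stated immediately after formula (9) with no separate proof, precisely because it is the specialization you describe, where conservativity ($X^h F=0$, hence $X^h\ln F=0$ since $F>0$ away from the zero section) kills the term $-nf\,X^h\ln F$ in the integrand. The only trivial slip is your reference to ``Definition 5''; in the paper the notion of a conservative horizontal distribution is introduced in Definition 3.
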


\noindent
IV. Using the reduced formula (10) we have that
$$X(YA_f)=$$
$$=\int_{\partial K_p}X^h(Y^h f)+\tilde{\mathcal{C}^{'}}(Y^c)X^h f+fX^h\tilde{\mathcal{C}^{'}}(Y^c)+\tilde{\mathcal{C}^{'}}(X^c)Y^h f+f\tilde{\mathcal{C}^{'}}(X^c)\tilde{\mathcal{C}^{'}}(Y^c)\, \mu_p.$$
Changing the role of the vector fields $X$ and $Y$ 
$$[X,Y]A_f=\int_{\partial K_p}[X^h,Y^h] f+f\left(X^h\tilde{\mathcal{C}^{'}}(Y^c)-Y^h\tilde{\mathcal{C}^{'}}(X^c)\right)\, \mu_p.$$
On the other hand
$$[X,Y]A_f\stackrel{(10)}{=}\int_{\partial K_p}[X,Y]^h f+f\tilde{\mathcal{C}^{'}}([X,Y]^c)\, \mu_p$$
and, consequently, we have that
$$
\int_{\partial K_p}R(X^c,Y^c)f\, \mu_p=\int_{\partial K_p}f\left(X^h\tilde{\mathcal{C}^{'}}(Y^c)-Y^h\tilde{\mathcal{C}^{'}}(X^c)-\tilde{\mathcal{C}^{'}}([X,Y]^c)\right)\, \mu_p,
$$
where
$$R(X^c,Y^c):=-v[X^h,Y^h]=[X,Y]^h-[X^h,Y^h]$$
is the curvature of the horizontal distribution, see [5] and [6].

\vspace{0.2cm}
\noindent
V. In case of a conservative and torsion-free horizontal distribution the result can be written into the form
\begin{equation}
\int_{\partial K_p}R(X^c,Y^c)f\, \mu_p=\int_{\partial K_p}f\left((D_{X^h}\tilde{\mathcal{C}^{'}})(Y^c)-(D_{Y^h}\tilde{\mathcal{C}^{'}})(X^c)\right)\, \mu_p,
\end{equation}
where $D$ is one of the Finsler connections of type Berwald, Cartan or Chern-Rund associated to $h$; [13] see also [5] and [6]. Taking $f=1$ we also have that
\begin{equation}
\int_{\partial K_p}\left((D_{X^h}\tilde{\mathcal{C}^{'}})(Y^c)-(D_{Y^h}\tilde{\mathcal{C}^{'}})(X^c)\right)\, \mu_p=0
\end{equation}
provided that $h$ is conservative and torsion-free. 

\vspace{0.2cm}
\noindent
VI. Consider the associated Riemannian metric tensors
$$\gamma_1(X_p,Y_p)=\int_{\partial K_p} g(X^v,Y^v)\, \mu_p,\ \ \gamma_2(X_p,Y_p)=\int_{\partial K_p} m(X^v,Y^v)\, \mu_p,$$
where 
$$m(X^v,Y^v)=g(X^v,Y^v)-(X^vL)(Y^vL)$$
is the angular metric tensor and
$$\gamma_3(X_p,Y_p)=\gamma_1(X_p,Y_p)-\gamma_2(X_p,Y_p)=\int_{\partial K_p} (X^vF)(Y^vF)\, \mu_p$$
together with the weighted versions 
$$\Gamma_i(X_p,Y_p)=\frac{1}{\textrm{Area}\ (\partial K_p)}\gamma_i(X_p,Y_p),$$
where $i=1, 2$, $3$ and  
$$\textrm{Area}\ (\partial K_p)=\int_{\partial K_p} 1\, \mu_p.$$

\begin{Rem}\emph{After introducing the $1$-form
$$\beta(X_p)=\int_{\partial K_p} X^vF\, \mu_p$$
we have, by Corollary 2, that 
$$F_1(v):=\sqrt{\Gamma_1(v,v)}+\frac{\beta(v)}{\textrm{Area}\ (\partial K)} \ \ \textrm{and}\ \ F_3(v):=\sqrt{\Gamma_3(v,v)}+\frac{\beta(v)}{\textrm{Area}\ (\partial K)}$$
are smoothly varying family of Randers-Minkowski functionals which are called the associated Randers functionals to the Finsler space.}
\end{Rem}

\vspace{0.2cm}
\noindent
VII. In what follows we shall use the canonical horizontal distribution of the Finsler manifold which is uniquely determined by the following conditions: it is conservative, torsion-free and homogeneous. Recall the rules of covariant derivative with respect to the Berwald, Cartan and Chern-Rund connections associated to the canonical horizontal distribution:

\vspace{0.5cm}
{\small{\begin{tabular}{|c|c|c|c|}
\hline
&&&\\
 & Berwald& Cartan & Chern-Rund\\
&&&\\
\hline
&&&\\
$D_{X^v}Y^v$ & 0 & $\mathcal{C}(X^c,Y^c)$ & 0\\
&&&\\
\hline
&&&\\
$D_{X^h}Y^v$ & $[X^h,Y^v]$ &  $[X^h,Y^v]+\tilde{\mathcal{C}}'(X^c,Y^c)$ & $[X^h,Y^v]+\tilde{\mathcal{C}}'(X^c,Y^c)$\\
&&&\\
\hline
&&&\\
$D_{X^v}Y^h$ & 0 &  $\mathcal{F}\tilde{\mathcal{C}}(X^c,Y^c)$ & 0\\
&&&\\
\hline
&&&\\
$D_{X^h}Y^v$ & $\mathcal{F}[X^h,Y^v]$ &  $\mathcal{F}[X^h,Y^v]+\mathcal{F}\tilde{\mathcal{C}}'(X^c,Y^c)$ & $\mathcal{F}[X^h,Y^v]+\mathcal{F}\tilde{\mathcal{C}}'(X^c,Y^c)$\\
&&&\\
\hline
\end{tabular}}}

\vspace{0.5cm}
\noindent
where the almost complex structure $\mathcal{F}$ associated to $h$ is defined by the following formulas
$$\mathcal{F}(X^h)=-X^v\ \ \textrm{and}\ \ \mathcal{F}(X^v)=X^h.$$
To determine the Lévi-Civita connections associated to the average Riemannian metrics $\gamma_1$, $\gamma_2$ and $\gamma_3$, respectively, we shall use the Cartan connection because it is metrical, i.e. 
$$X^vg(Y^v,Z^v)=g(D_{X^v}Y^v,Z^v)+g(Y^v,D_{X^v}Z^v),$$
$$
X^hg(Y^v,Z^v)=g(D_{X^h}Y^v,Z^v)+g(Y^v,D_{X^h}Z^v)
$$
and both the (v)v- and the (h)h-torsion vanish. The theory of the classical Finsler connections also says that the deflection tensor is identically zero:
$$
D_{X^h}C=0.
$$ 

\noindent
For the sake of simplicity consider vector fields $X$, $Y$ and $Z$ on the base manifold such that each pair of the vector fields has a vanishing Lie bracket. Using the standard Lévi-Civita proccess it follows by (10) that
$$X\gamma_1(Y,Z)=\int_{\partial K} X^hg(Y^v,Z^v)+\tilde{\mathcal{C}}'(X^c)g(Y^v,Z^v) \, \mu,$$
$$Y\gamma_1(X,Z)=\int_{\partial K} Y^hg(X^v,Z^v)+\tilde{\mathcal{C}}'(Y^c)g(X^v,Z^v) \, \mu,$$
$$Z\gamma_1(X,Y)=\int_{\partial K} Z^hg(X^v,Y^v)+\tilde{\mathcal{C}}'(Z^c)g(X^v,Y^v) \, \mu.$$
Therefore
\begin{equation}
\gamma_1\left((\nabla_1)_{X}Y,Z\right)=\int_{\partial K}g\left(D_{X^h}Y^v,Z^v\right)+\frac{1}{2}\rho(X^c,Y^c,Z^c)\, \mu_,
\end{equation}
where
$$\rho(X^c,Y^c,Z^c)=\tilde{\mathcal{C}^{'}}(X^c)g(Y^v,Z^v)+\tilde{\mathcal{C}^{'}}(Y^c)g(X^v,Z^v)-\tilde{\mathcal{C}^{'}}(Z^c)g(X^v,Y^v).$$
To compute the corresponding formula for $\gamma_3$ we use the identity
$$Y^vF=\frac{1}{F}\ g(Y^v,C).$$
Therefore
$$X^h(Y^vF)=\frac{1}{F}\bigg(g(D_{X^h}Y^v,C)+g(Y^v,D_{X^h}C)\bigg)=\frac{1}{F}\ g(D_{X^h}Y^v,C)$$
because of the vanishing of the deflection.
We have

{\small{
$$X\gamma_3(Y,Z)=\int_{\partial K}g(D_{X^h}Y^v,C)\frac{Z^v F}{F}+\frac{Y^v F}{F}g(D_{X^h}Z^v,C)+\tilde{\mathcal{C}}'(X^c)(Y^v F)(Z^v F) \, \mu,$$
$$Y\gamma_3(X,Z)=\int_{\partial K} g(D_{Y^h}X^v,C)\frac{Z^v F}{F}+\frac{X^v F}{F}g(D_{Y^h}Z^v,C)+\tilde{\mathcal{C}}'(Y^c)(X^v F)(Z^v F) \, \mu,$$
$$Z\gamma_3(X,Y)=\int_{\partial K} g(D_{Z^h}X^v,C)\frac{Y^v F}{F}+\frac{X^v F}{F}g(D_{Z^h}Y^v,C)+\tilde{\mathcal{C}}'(Z^c)(X^v F)(Y^v F) \, \mu,$$}}

\vspace{0.2cm}
\noindent
and the Lévi-Civita process results in the formula
\begin{equation}
\gamma_3\left((\nabla_3)_{X}Y,Z\right)=\int_{\partial K}g\left(D_{X^h}Y^v,C\right)\frac{Z^v F}{F}+\frac{1}{2}\delta(X^c,Y^c,Z^c)\, \mu_p,
\end{equation}
where
$$\delta(X^c,Y^c,Z^c)=\tilde{\mathcal{C}^{'}}(X^c)(Y^v F)(Z^v F)+\tilde{\mathcal{C}^{'}}(Y^c)(X^v F)(Z^v F)-$$
$$\tilde{\mathcal{C}^{'}}(Z^c)(X^v F)(Y^v F).$$

\vspace{0.2cm}
\noindent
Finally $\gamma_2=\gamma_1-\gamma_3$ shows that
$$\gamma_2((\nabla_2)_X Y,Z)=\gamma_1\left((\nabla_1)_{X}Y,Z\right)-\gamma_3\left((\nabla_3)_{X}Y,Z\right).$$
Therefore
\begin{equation}
\gamma_2\left((\nabla_2)_{X}Y,Z\right)=\int_{\partial K}m\left(D_{X^h}Y^v,Z^v\right)+\frac{1}{2}\eta(X^c,Y^c,Z^c)\, \mu_p,
\end{equation}
where 
$$\eta(X^c,Y^c,Z^c)=\tilde{\mathcal{C}^{'}}(X^c)m(Y^v,Z^v)+\tilde{\mathcal{C}^{'}}(Y^c)m(X^v,Z^v)-\tilde{\mathcal{C}^{'}}(Z^c)m(X^v,Y^v).$$

\section{Funk metrics} 

Let $K\subset \mathbb{R}^n$ be a convex body containing the origin in its interior and suppose that the induced function $L$ is a Finsler-Minkowski functional on the vector space $\mathbb{R}^n$. Changing the origin in the interior of $K$ we have a smoothly varying family of Finsler-Minkowski functionals parameterized by the interior points of $K$:
\begin{equation}
L\left(p+\frac{v}{L_p(v)}\right)=1,
\end{equation}
where the script refers to the base point of the tangent vector $v$.

\noindent
I. Conformal geometry. Let $U$ be the interior of the indicatrix body. Together with the Finslerian fundamental function
$$F\colon TU=U\times \mathbb{R}^n\to \mathbb{R},\ v_p\mapsto F(v_p):=L_p(v)$$
the manifold $U$ is called a Funk space (or Funk manifold). It is a special Finsler manifold. 
Another notations and terminology: $K_p$ denotes the unit ball with respect to the functional $L_p$ and $\partial K_p$ is its boundary. Especially $K_{{\bf 0}}=K$ and $\partial K_{{\bf 0}}=\partial K$. Using the Riemann-Finsler metric 
$$g_{ij}=\frac{1}{2}\frac{\partial^2 F^2}{\partial y^i \partial y^j},$$
where the partial derivatives are taken with respect to the variable $v$, any tangent space (away from the origin) can be interpreted as a Riemannian manifold. So is $\partial K_p$ equipped with the usual induced Riemannian structure. It is a totally umbilical Riemannian submanifold of the corresponding tangent space (see section 2/II).
\begin{Thm}
For any $p\in U$ the indicatrix hypersurfaces $\partial K_p$ and $\partial K$ are conformal to each other as Riemannian submanifolds in the corresponding tangent spaces. The conform mapping between these structures comes from the projection
$$\rho(v_p):=p+\frac{v}{L_p(v)}\in \partial K$$
of the tangent space $TU$. Especially
$$g_{\rho(v_p)}(w,z)=\left(1-p^k\frac{\partial L}{\partial u^k}_{\rho(v_p)}\right)g_{v_p}(w,z),$$
where $w$ and $z$ are tangential to the indicatrix hypersurface $\partial K$ at $\rho(v_p)$, i.e they are tangential to $\partial K_p$ at $v_p$ too.
\end{Thm}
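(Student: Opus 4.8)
The plan is to reduce the statement to a single pointwise identity between the two Riemann--Finsler metrics and then read off conformality. First I would pin down the geometric meaning of $\rho$ on the indicatrix: where $L_p(v)=1$ the definition gives $\rho(v_p)=p+v$, so the restriction $\rho|_{\partial K_p}$ is merely the translation $v\mapsto p+v$, a diffeomorphism of $\partial K_p=\partial K-p$ onto $\partial K$ whose differential is the identity of $\mathbb{R}^n$. This is precisely why a vector tangent to $\partial K$ at $\rho(v_p)$ may be regarded as tangent to $\partial K_p$ at $v_p$, and it reduces the theorem to comparing the two induced metrics on a common space of tangent vectors.

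Next I would extract the first-order relation between $F$ and $L$ by differentiating the defining equation $L(p+v/F)=1$ with respect to the fibre variable $y^j$. Writing $q:=\rho(v_p)$, so that $v^i/F=q^i-p^i$, and applying Euler's theorem to the degree-one function $L$ in the form $q^k(\partial L/\partial u^k)(q)=L(q)=1$, the computation collapses to
$$\lambda\,\frac{\partial F}{\partial y^j}(v_p)=\frac{\partial L}{\partial u^j}(q),\qquad \lambda:=1-p^k\frac{\partial L}{\partial u^k}(q).$$
Two consequences follow at once. First, $\lambda>0$: since $L$ is the convex gauge of $K$ and $p$ is interior, convexity gives $p^k(\partial L/\partial u^k)(q)\leq L(p)<1$, so the prospective conformal factor is genuinely positive. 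Second, the gradients $\partial F/\partial y^j(v_p)$ and $(\partial L/\partial u^j)(q)$ are proportional, whence the tangency conditions $w^j\partial F/\partial y^j=0$ and $w^j(\partial L/\partial u^j)(q)=0$ coincide, confirming the identification of tangent vectors from the first paragraph.

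The heart of the proof is then a single tangential directional derivative of $\lambda\,\partial F/\partial y^j=(\partial L/\partial u^j)(q)$. Differentiating in a tangent direction $w$ and contracting against a second tangent vector $z$, the term carrying $\partial\lambda/\partial y^k$ is annihilated because it meets $z^j\partial F/\partial y^j=0$, while $w^k\partial q^i/\partial y^k=w^i/F$ (again by tangency of $w$); what survives is
$$\lambda\,\frac{\partial^2F}{\partial y^j\partial y^k}\,w^jz^k=\frac{1}{F}\,\frac{\partial^2L}{\partial u^i\partial u^j}(q)\,w^iz^j.$$
I would finally assemble the metrics from $g_{jk}=\partial F/\partial y^j\,\partial F/\partial y^k+F\,\partial^2F/\partial y^j\partial y^k$ on the Funk side and the analogous expression for $L$ at $q$ (where $L(q)=1$). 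The rank-one gradient terms vanish against tangent vectors, leaving $g_{v_p}(w,z)=F(\partial^2F/\partial y^j\partial y^k)w^jz^k$ and $g_{\rho(v_p)}(w,z)=(\partial^2L/\partial u^i\partial u^j)(q)w^iz^j$; multiplying the former by $\lambda$ and invoking the displayed identity makes the factor $F$ cancel and leaves exactly $g_{\rho(v_p)}(w,z)=\lambda\,g_{v_p}(w,z)$, valid at every non-zero $v_p$.

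I expect the main obstacle to be organizing the second differentiation so that it does not explode. A naive computation of the full vertical Hessian $\partial^2F/\partial y^j\partial y^k$ produces cross terms built from $p^i\,\partial^2L/\partial u^i\partial u^j$ and $p^ip^j\,\partial^2L/\partial u^i\partial u^j$, together with the derivative of $\lambda$, and forces one to invoke Euler's theorem a second time for the degree-zero functions $\partial L/\partial u^j$. The conceptual simplification that keeps the argument manageable is to contract with the tangent vectors $w$ and $z$ before differentiating $\lambda$, so that all of these auxiliary terms are proportional to the vertical gradient and drop out, leaving only the Hessians and the clean conformal factor $\lambda=1-p^k(\partial L/\partial u^k)(\rho(v_p))$.
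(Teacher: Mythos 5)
Your proof is correct and follows essentially the same route as the paper: differentiate the defining relation $L\circ\rho=1$ with respect to the fibre variables, apply Euler's theorem to get the gradient proportionality $\frac{\partial L}{\partial u^j}\circ\rho=\lambda\,\frac{\partial F}{\partial y^j}$ (the paper's equation (18)), differentiate once more, and let the tangency of $w$ and $z$ annihilate all cross terms. The differences are organizational rather than substantive: by contracting with the tangent vectors before the second differentiation you avoid the paper's full Hessian identity (19) and its second appeal to Euler's theorem for the degree-zero functions $\partial L/\partial u^i$, and your direct convexity argument for $\lambda>0$ establishes up front what the paper records only afterwards as Corollary 4.
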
 

\begin{Pf} Equation (16) says that $L\circ \rho=1$. In terms of the usual local coordinates of the tangent space $TU$ we can write that $\rho^j=x^j+y^j/F$, 
where $\rho^j=u^j\circ \rho$ is the corresponding coordinate function with respect to the standard coordinate system of $U$. Differentiating with respect to $y^i$ we have that
$$0=\frac{\partial L}{\partial u^j}\circ \rho \frac{\partial \rho^j}{\partial y^i}=\frac{\partial L}{\partial u^j}\circ \rho \frac{\delta_i^j F-y^j\frac{\partial F}{\partial y^i}}{F^2}=\frac{1}{F}\frac{\partial L}{\partial u^i}\circ \rho-\frac{y^j}{F^2}\frac{\partial L}{\partial u^j}\circ \rho\frac{\partial F}{\partial y^i}.$$
Uisng the homogenity property
$$\left( x^j+\frac{y^j}{F}\right)\frac{\partial L}{\partial u^j}\circ \rho=L\circ \rho=1$$
we have that
\begin{equation}
0=\frac{1}{F}\left(\frac{\partial L}{\partial u^i}\circ \rho+\left(x^j\frac{\partial L}{\partial u^j}\circ \rho-1\right)\frac{\partial F}{\partial y^i}\right).
\end{equation}
Therefore
\begin{equation}
\frac{\partial L}{\partial u^i}\circ \rho=\left(1-x^k\frac{\partial L}{\partial u^k}\circ \rho\right)\frac{\partial F}{\partial y^i}.
\end{equation}
Differentiating equation (18) with respect to the variable $y^j$  
$$\frac{\partial^2 L}{\partial u^l \partial u^i}\circ \rho\frac{\partial \rho^l}{\partial y^j}=-x^k\frac{\partial^2 L}{\partial u^l \partial u^k}\circ \rho\frac{\partial \rho^l}{\partial y^j}\frac{\partial F}{\partial y^i}+\left(1-x^k\frac{\partial L}{\partial u^k}\circ \rho\right)\frac{\partial^2 F}{\partial y^j \partial y^i},$$
where
$$\frac{\partial \rho^l}{\partial y^j}= \frac{\delta_j^l F-y^l\frac{\partial F}{\partial y^j}}{F^2}$$
and, by the homogenity property,
$$\left( x^l+\frac{y^l}{F}\right)\frac{\partial^2 L}{\partial u^l \partial u^i}\circ \rho=0.$$
Therefore
\begin{equation}
\begin{aligned}
\frac{1}{F}\frac{\partial^2 L}{\partial u^j \partial u^i}\circ \rho&=\left(1-x^k\frac{\partial L}{\partial u^k}\circ \rho\right)\frac{\partial^2 F}{\partial y^j \partial y^i}-\\
\frac{x^l}{F}\frac{\partial^2 L}{\partial u^l \partial u^i}\circ \rho \frac{\partial F}{\partial y^j}&-\frac{x^k}{F}\frac{\partial^2 L}{\partial u^k \partial u^j}\circ \rho \frac{\partial F}{\partial y^i}-\frac{x^k x^l}{F^2}\frac{\partial^2 L}{\partial u^k \partial u^l}\circ \rho \frac{\partial F}{\partial y^i}\frac{\partial F}{\partial y^j}.
\end{aligned}
\end{equation}
Let $w$ and $z$ be tangential to the indicatrix hypersurface $\partial K_p$ at $v_p$, i.e. $F(v_p)=1$ and
$$W_{v_p} F=Z_{v_p} F=0,$$
where
$$W=w^1\frac{\partial}{\partial y^1}+\ldots+w^n\frac{\partial}{\partial y^n}\ \ \textrm{and}\ \ Z=z^1\frac{\partial}{\partial y^1}+\ldots+z^n\frac{\partial}{\partial y^n}.$$
Equation (18) says that $w$ and $z$ are tangential to the indicatrix hypersurface $\partial K$ at $\rho(v_p)$ and, consequently,
$$g_{\rho(v_p)}(w,z)=w^iz^j \frac{\partial^2 L}{\partial u^j \partial u^i}_{\rho(v_p)}.$$
which implies, by equation (19), that 
$$g_{\rho(v_p)}(w,z)=\left(1-p^k\frac{\partial L}{\partial u^k}_{\rho(v_p)}\right)w^iz^j\frac{\partial^2 F}{\partial y^j \partial y^i}_{v_p}=$$
$$\left(1-p^k\frac{\partial L}{\partial u^k}_{\rho(v_p)}\right)g_{v_p}(w,z)$$
as was to be stated. 
\end{Pf}

\begin{Rem} Note that the metric $g$ associated to $L$ is just the Riemann-Finsler metric of the Finsler manifold $(U,F)$ at the origin.
\end{Rem}

\begin{Cor}
$$1-x^k\frac{\partial L}{\partial u^k}\circ \rho>0.$$
\end{Cor}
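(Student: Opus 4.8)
The plan is to read the corollary off Theorem 3 essentially for free, using positive definiteness of the Riemann--Finsler metric. First I would record that the quantity in question, say
$$\sigma(v_p):=1-x^k\frac{\partial L}{\partial u^k}\circ\rho,$$
is positively homogeneous of degree zero in $v$: since $L_p$ is homogeneous of degree one we have $\rho(tv_p)=\rho(v_p)$ for $t>0$, and $x^k$ does not depend on $v$. Hence it suffices to prove $\sigma(v_p)>0$ on the indicatrix, i.e. for $v_p$ with $F(v_p)=1$, in which case $\rho(v_p)\in\partial K$ and Theorem 3 applies literally.

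Next I would fix such a $v_p$ and pick any nonzero tangent vector $w$ to $\partial K_p$ at $v_p$; such a $w$ exists because $\dim\partial K_p=n-1\ge 1$. By equation (18) the condition $W_{v_p}F=0$ forces $w$ to be tangent to $\partial K$ at $\rho(v_p)$ as well, so Theorem 3 with $z=w$ gives
$$g_{\rho(v_p)}(w,w)=\sigma(v_p)\,g_{v_p}(w,w).$$
Since $L$ is a Finsler--Minkowski functional, the Hessian of the energy is positive definite at every nonzero point, whence $g_{\rho(v_p)}(w,w)>0$ and $g_{v_p}(w,w)>0$. Dividing yields
$$\sigma(v_p)=\frac{g_{\rho(v_p)}(w,w)}{g_{v_p}(w,w)}>0,$$
which is the assertion.

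I do not expect a real obstacle here; the only points that deserve a word of care are the reduction to the indicatrix by homogeneity and the existence of a nonzero admissible $w$, both harmless for $n\ge 2$. As a consistency check, and to make the geometric content transparent, one can give a direct argument: multiplying equation (18) by $y^i$ and invoking Euler's theorem for $F$ gives $\sigma(v_p)=\frac{1}{F}\,y^i\frac{\partial L}{\partial u^i}\circ\rho$, and substituting $y=F(\rho-p)$ (from $\rho=p+v/F$) produces the closed form
$$\sigma(v_p)=\left\langle \rho(v_p)-p,\ \nabla L(\rho(v_p))\right\rangle.$$
Because $L$, as the Minkowski functional of the convex body $K$, is a convex function, its gradient inequality $L(p)\ge L(\rho)+\langle\nabla L(\rho),p-\rho\rangle$ rearranges to $\langle\nabla L(\rho),\rho-p\rangle\ge L(\rho)-L(p)=1-L(p)$, and $L(p)<1$ since $p$ lies in the interior of $K$. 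This recovers the strict positivity of $\sigma$ together with the explicit lower bound $1-L(p)>0$.
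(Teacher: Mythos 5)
Your proof is correct and is essentially the paper's own argument: the corollary is read off from Theorem 3, since the conformal factor relating the two positive definite metrics $g_{\rho(v_p)}$ and $g_{v_p}$ must be positive -- the paper leaves this step implicit, and your care about the zero-homogeneity reduction to the indicatrix and the existence of a nonzero admissible $w$ only makes it explicit. Your ``consistency check'' via the gradient inequality for the convex functional $L$ is also sound and is in fact the same device the paper itself employs at the end of Section 4, where the fundamental inequality $w^k\frac{\partial L}{\partial u^k}_v \leq L(w)$ gives the quantitative bound $1-p^k\frac{\partial L}{\partial u^k}\geq 1-L(p)>0$ used for estimate (32), so that second argument is a legitimate independent proof (with a sharper conclusion) rather than a mere check.
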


The geometric meaning of the inequality can be seen from the homogenity property
$$\left( x^j+\frac{y^j}{F}\right)\frac{\partial L}{\partial u^j}\circ \rho=L\circ \rho=1.$$
Especially $v$ is never tangential to the indicatrix hypersurface $\partial K$ at $\rho(v_p)$. Inequality 
$$0<1-x^k\frac{\partial L}{\partial u^k}\circ \rho=\frac{y^j}{F}\frac{\partial L}{\partial u^j}\circ \rho$$
says that $v$ represents an outward pointing direction relative to the indicatrix hyoersurface at $\rho(v_p)$. 

\noindent
II. Projective geometry (Okada's theorem) [11, 12]. After differentiating equation (16) with respect to the variable $x^i$:
$$0=\frac{\partial L}{\partial u^j}\circ \rho \frac{\partial \rho^j}{\partial x^i}=\frac{\partial L}{\partial u^j}\circ \rho \left(\delta_i^j-\frac{y^j}{F^2}\frac{\partial F}{\partial x^i}\right)=\frac{\partial L}{\partial u^i}\circ \rho-\frac{y^j}{F^2}\frac{\partial L}{\partial u^j}\circ \rho \frac{\partial F}{\partial x^i}$$
Using the homogenity property
$$\left( x^j+\frac{y^j}{F}\right)\frac{\partial L}{\partial u^j}\circ \rho=L\circ \rho=1$$
again we have that
$$0=\frac{\partial L}{\partial u^i}\circ \rho+\frac{1}{F}\left(x^j\frac{\partial L}{\partial u^j}\circ \rho-1\right)\frac{\partial F}{\partial x^i}.$$
Together with equation (17) and Corollary 4 
\begin{equation}
0=\frac{\partial F}{\partial y^i}-\frac{1}{F}\frac{\partial F}{\partial x^i} \ \Rightarrow\ F\frac{\partial F}{\partial y^i}=\frac{\partial F}{\partial x^i}
\end{equation}
which is just Okada's theorem for Funk spaces. In terms of differential geometric structures we can write equation (20) into the form
$$d_h F=Fd_JF,$$
where $h$ is the horizontal distribution detemined by the coordinate vector fields $\partial/\partial x^1$, ..., $\partial/\partial x^n$ and $J$ is the canonical almost tangent structure on the tangent space $TU$. Then
$$d_Jd_hF=d_JF \wedge d_J F+Fd_J^2F=0$$
which is just the coordinate-free expression for the classical Rapcsák's equation of projective equivalence [13]. Therefore the canonical spray $\xi$ of the Funk space can be expressed into the following special form
$$\xi=y^i\frac{\partial}{\partial x^i}-Fy^i\frac{\partial}{\partial y^i}.$$
It says that the straight lines in U can be reparameterized to the geodesics of the Funk manifold [8]. If $c(t)=p+tv$ then we need the solution of the differential equation\footnote{Equation (21) can be considered as a correction of equation (21) in [17].}
\begin{equation}
\theta''=-\theta'F(v_p)
\end{equation}
Under the initial conditions $\theta(0)=0$ and $\theta'(0)=1$. We have that
\begin{equation}
\theta(t)=\frac{1-\textrm{exp}\ (-tF(v_p))}{F(v_p)},
\end{equation}
i.e.
$$\tilde{c}(t)=p+\frac{1-\textrm{exp}\ (-tF(v_p))}{F(v_p)}v$$
is a geodesic of the Funk manifold. Okada's theorem is a rule how to change derivatives with respect to $x^i$ and $y^i$. This results in relatively simple formulas for the canonical objects of the Funk manifold. In what follows we are going to summarize some of them (proofs are straightforward calculations, [1]):
\begin{equation}
G^k=\frac{1}{2}y^kF,\ \ G_i^k=\frac{\partial G^k}{\partial y^k}=\frac{F}{2}\delta_{i}^k+\frac{1}{2}y^k \frac{\partial F}{\partial y^i},
\end{equation}
\begin{equation}
X_i^h=\frac{\partial}{\ \partial x^i}-G_i^k\frac{\partial}{\ \partial y^k}=\frac{\partial}{\ \partial x^i}-\left(\frac{F}{2}\delta_{i}^k+\frac{1}{2}y^k \frac{\partial F}{\partial y^i}\right)\frac{\partial}{\ \partial y^k}
\end{equation}
for the canonical horizontal distribution. The first and the second Cartan tensors are related as  
\begin{equation}
\mathcal{C}'=\frac{1}{2}F\mathcal{C}
\end{equation}
and the curvature of the canonical horizontal distribution can be expressed in the following form 
\begin{equation}
R\left(\frac{\partial}{\ \partial x^i},\frac{\partial}{\ \partial x^i}\right)=\frac{F}{4}\left(\frac{\partial F}{\ \partial y^i}\frac{\partial}{\ \partial y^j}-\frac{\partial F}{\ \partial y^j}\frac{\partial}{\ \partial y^i}\right).
\end{equation}
In terms of lifted vector fields
\begin{equation}
R(X^c,Y^c)=\frac{1}{4}\bigg{(}g(X^v,C)Y^v-g(Y^v,C)X^v\bigg{)}.
\end{equation}

\noindent
III. Applications. Using relation (25) we can specialize the basic formula (10) for derivatives of integral functions. We apply this technic only in the special case of the area-function
\begin{equation}
r\colon U\to \mathbb{R}, \ \ p\mapsto r(p):=A_1(p)=\int_{\partial K_p} 1\, \mu_p
\end{equation}

\begin{Thm} The area function is convex.
\end{Thm}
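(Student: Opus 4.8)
The plan is to prove convexity by writing $r$ as an integral, over a \emph{fixed} domain, of a family of functions that are convex in the base point $p$; the tool that makes this possible is the conformal projection $\rho$ of Theorem 4. Set $\lambda:=1-p^{k}\frac{\partial L}{\partial u^{k}}\circ\rho$ for the conformal factor appearing there. First I would observe that $\rho$ restricts to a diffeomorphism between the two indicatrices: if $v_{p}\in\partial K_{p}$, i.e. $F(v_{p})=L_{p}(v)=1$, then $\rho(v_{p})=p+v$ and $L(p+v)=1$ by (16), so $\rho|_{\partial K_{p}}$ is simply the translation $v\mapsto p+v$ carrying $\partial K_{p}$ bijectively onto the $p$-independent hypersurface $\partial K$.

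Since the Liouville field $C$ is the outward unit normal (section 2/II), the form $\mu_{p}=\iota_{C}\,d\mu_{p}$ is exactly the Riemannian volume of the induced metric on $\partial K_{p}$, and likewise $\mu$ on $\partial K$. Theorem 4 asserts that these induced metrics differ by the factor $\lambda$ on the $(n-1)$-dimensional indicatrix, so the induced volume forms satisfy $\rho^{*}\mu=\lambda^{(n-1)/2}\mu_{p}$. Changing variables by $\rho$ then gives the closed expression
$$r(p)=\int_{\partial K_{p}}\mu_{p}=\int_{\partial K_{p}}\lambda^{-(n-1)/2}\,\rho^{*}\mu=\int_{\partial K}\Big(1-p^{k}\frac{\partial L}{\partial u^{k}}\Big)^{-(n-1)/2}\mu,$$
where the integral is now over the fixed domain $\partial K$ and the derivatives $\partial L/\partial u^{k}$ are evaluated at the running point of $\partial K$, hence independent of $p$.

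Convexity is then immediate. For each fixed point of $\partial K$ the quantity $1-p^{k}\partial L/\partial u^{k}$ is an \emph{affine} function of $p$, which is strictly positive by Corollary 4, and $t\mapsto t^{-(n-1)/2}$ is convex on $(0,\infty)$; composing a convex function with an affine one, each integrand is a convex function of $p$, and integrating convex functions against the positive measure $\mu$ preserves convexity. The point to get right is the exponent: the conformal rescaling of the $(n-1)$-dimensional induced metric multiplies the volume form by $\lambda^{(n-1)/2}$, so the integrand carries the \emph{negative} power $\lambda^{-(n-1)/2}$, and it is precisely the positivity $\lambda>0$ of Corollary 4 that makes this negative power well defined and convex.

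Alternatively one can stay inside the average calculus, as the sentence before the theorem suggests: by (10) with $f=1$ one has $\partial r/\partial u^{i}=\int_{\partial K_{p}}\tilde{\mathcal{C}}'(X_i^{c})\,\mu_{p}$, which by (25) equals $\tfrac12\int_{\partial K_{p}}F\,\mathcal{C}_{i}\,\mu_{p}$ with $\mathcal{C}_{i}=g^{jk}\mathcal{C}_{ijk}$; differentiating once more with (10) yields
$$\frac{\partial^{2}r}{\partial u^{i}\partial u^{j}}=\int_{\partial K_{p}}\Big(X_i^{h}\tilde{\mathcal{C}}'(X_j^{c})+\tilde{\mathcal{C}}'(X_i^{c})\tilde{\mathcal{C}}'(X_j^{c})\Big)\mu_{p},$$
the symmetry of which follows from the curvature identity of section 3/IV with $f=1$ (since $R(X^{c},Y^{c})$ is vertical-valued and annihilates constants). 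Contracting with $a^{i}a^{j}$, the quadratic term $(\tilde{\mathcal{C}}'(A^{c}))^{2}$ is a manifest square, so the entire difficulty of this route is the sign of the first-order term $\int_{\partial K_{p}}A^{h}\tilde{\mathcal{C}}'(A^{c})\,\mu_{p}$. I expect this to be the main obstacle: one would dispose of it by integrating by parts on the indicatrix and invoking the very special Funk curvature (27), or—more cheaply—by comparing with the conformal computation above, which already shows the full Hessian to be positive semidefinite.
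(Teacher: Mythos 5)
Your main argument is correct, and it is a genuinely different proof from the one in the paper. The paper stays inside the average calculus: starting from equation (29), $\frac{\partial r}{\partial u^i}=\frac{n-1}{2}\int_{\partial K_p}\frac{\partial F}{\partial y^i}\,\mu_p$, it differentiates once more, eliminates the awkward term $F\frac{\partial^2F}{\partial y^j\partial y^i}$ by combining Okada's relation (20) with the divergence theorem applied to the indicatrix-tangent field $T=F\frac{\partial F}{\partial y^i}\frac{\partial}{\partial y^j}-\frac{\partial F}{\partial y^i}\frac{\partial F}{\partial y^j}C$, and lands on the explicit Hessian formula (31), $\frac{\partial^2 r}{\partial u^j\partial u^i}=\frac{n^2-1}{4}\int_{\partial K_p}\frac{\partial F}{\partial y^i}\frac{\partial F}{\partial y^j}\,\mu_p$, i.e.\ a positive multiple of $\gamma_3$, which is positive definite. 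You instead transport the integral to the fixed hypersurface $\partial K$ via the projection $\rho$ (this is Theorem 2 of the paper, not Theorem 4 as you cite it), obtaining $r(p)=\int_{\partial K}\bigl(1-p^k\frac{\partial L}{\partial u^k}\bigr)^{-\frac{n-1}{2}}\mu$, and then convexity is pointwise: a convex function of an affine function of $p$, positive by Corollary 4, integrated against a positive measure. All your steps check out: $\rho|_{\partial K_p}$ is the translation $v\mapsto p+v$; $\mu_p=\iota_C\,d\mu_p$ restricted to $\partial K_p$ is the induced Riemannian volume because $C$ is the unit outward normal; and the conformal factor on an $(n-1)$-dimensional metric scales the volume form by its $(n-1)/2$ power. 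In fact the paper derives exactly this closed formula for $r$ immediately \emph{after} the convexity theorem (to prove the bounds (32)), but never uses it for convexity, so your observation that it already contains the theorem is a real simplification. What each approach buys: yours is shorter and more elementary, needing no Finsler connection machinery beyond Theorem 2; the paper's computation yields the quantitative identity (31) identifying the Hessian with $\frac{n^2-1}{4}\gamma_3$, which is what Remark 5 relies on and which gives strict convexity (positive definiteness) outright, whereas your argument as written gives convexity only --- enough for the stated theorem and for its use in the main theorem, though strict convexity would follow with the extra remark that for each direction $w\neq 0$ the affine factor $p\mapsto w^k\frac{\partial L}{\partial u^k}$ vanishes only on a measure-zero subset of $\partial K$.

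Your second, ``average calculus'' sketch is essentially the paper's actual route, and you correctly identified where the work lies: the unresolved first-order term is precisely what the paper disposes of with Okada's relation and the divergence trick above. Since your first argument is complete, this incompleteness is harmless, but you should not present the fallback ``compare with the conformal computation'' as if it resolved that term: it only re-proves positive semidefiniteness of the total Hessian, not the sign of that individual summand.
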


\begin{proof}
Remark 1 shows that
\begin{equation}
\frac{\partial r}{\partial u^i}_p =\frac{1}{2}\int_{\partial K_p} F\tilde{\mathcal{C}}\left(\frac{\partial}{\partial x^i}\right)\, \mu_p=\frac{n-1}{2}\int_{\partial K_p} \frac{\partial F}{\partial y^i}\, \mu_p.
\end{equation}
Differentiating again
\begin{equation}
\frac{\partial^2 r}{\partial u^j \partial u^i}_p=\frac{n-1}{2}\int_{\partial K_p} \left(\frac{\partial}{\partial u^j}\right)^h \frac{\partial F}{\partial y^i}+\frac{1}{2} \frac{\partial F}{\partial y^i}F\tilde{\mathcal{C}}\left(\frac{\partial}{\partial x^j}\right)\, \mu_p.
\end{equation}
Here, by (24),
$$\left(\frac{\partial}{\partial u^j}\right)^h \frac{\partial F}{\partial y^i}=\frac{\partial^2 F}{\partial x^j \partial y^i}-\frac{1}{2}F\frac{\partial^2 F}{\partial y^j \partial y^i}$$
because of the zero homogenity of the vertical partial derivatives of the Finslerian fundamental function. Differentiating Okada's relation (20) it follows that
$$\frac{\partial^2 F}{\partial x^j \partial y^i}=\frac{\partial F}{\partial y^j}\frac{\partial F}{\partial y^i}+F\frac{\partial^2 F}{\partial y^j \partial y^i}$$
and thus
$$\left(\frac{\partial}{\partial u^j}\right)^h \frac{\partial F}{\partial y^i}=\frac{\partial F}{\partial y^j}\frac{\partial F}{\partial y^i}+\frac{1}{2}F\frac{\partial^2 F}{\partial y^j \partial y^i}.$$
The only question is how to substitute the last term in the integrand of (30) to give a positive definite expression. First of all note that the vector field
$$T:=F\frac{\partial F}{\partial y^i}\frac{\partial}{\partial y^j}-\frac{\partial F}{\partial y^i}\frac{\partial F}{\partial y^j}C$$
is tangential to the indicatrix hypersurface and, by the divergence theorem,
$$\int_{K_p} \textrm{div}\ T\, d\mu_p=0.$$
In a more detailed form (using the zero homogenity of the vertical derivatives of the Finslerian fundamental function)
$$\textrm{div}\ T=F\frac{\partial F}{\partial y^i} \ \textrm{div}\frac{\partial}{\partial y^j}+\frac{\partial F}{\partial y^i}\frac{\partial F}{\partial y^j}+F\frac{\partial^2 F}{\partial y^j \partial y^i}-\frac{\partial F}{\partial y^i}\frac{\partial F}{\partial y^j}\ \textrm{div}\ C=$$
$$= F \frac{\partial F}{\partial y^i}\  \tilde{\mathcal{C}}\left(\frac{\partial}{\partial x^j}\right)+F\frac{\partial^2 F}{\partial y^j \partial y^i}-(n-1) \frac{\partial F}{\partial y^i}\frac{\partial F}{\partial y^j}.$$
We have
$$\int_{K_p}F\frac{\partial F}{\partial y^i}\  \tilde{\mathcal{C}}\left(\frac{\partial}{\partial x^j}\right)\, d\mu_p=
\int_{K_p} (n-1) \frac{\partial F}{\partial y^i}\frac{\partial F}{\partial y^j}-F\frac{\partial^2 F}{\partial y^j \partial y^i}\, d\mu_p$$
and thus
\begin{equation}
\frac{\partial^2 r}{\partial u^j \partial u^i}_p=\frac{n^2-1}{4}\int_{\partial K_p} \frac{\partial F}{\partial y^i}\frac{\partial F}{\partial y^j}\,\mu_p.
\end{equation}
The convexity follows from the positive definiteness of the Hessian matrix of the area function.
\end{proof}

\begin{Rem}
Equation \emph{(}29\emph{)} implies that 
$$\textrm{d} r=\frac{n-1}{2}\beta$$
and, consequently, the associated Randers functionals can be written into the following forms
$$F_1(v):=\sqrt{\Gamma_1(v,v)}+\frac{2}{n-1}\textrm{d}\log r \ \ \textrm{and}\ \ F_3(v):=\sqrt{\Gamma_3(v,v)}+\frac{2}{n-1}\textrm{d}\log r.$$
Thereore they are projectively equivalent to the Riemannian manifold U equipped with the weighted Riemannian metric tensors $\Gamma_1$ and $\Gamma_3$, respectively; see \emph{[7]}. On the other hand equation \emph{(}31\emph{)} shows that the second order partial derivatives of the area function coincide the associated Riemannian metric $\gamma_3$ up to a constant proportional term.
\end{Rem}

\begin{Cor} The Minkowskian indicatrix is ballanced if and only if the area function of the associated Funk space has a global minimizer at the origin.
\end{Cor}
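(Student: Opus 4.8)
The plan is to translate balancedness into a statement about the gradient of the area function \(r\), and then to use convexity to promote a mere critical point to a global minimizer. The starting point is the relation recorded in the preceding Remark, namely \(\mathrm{d}r=\tfrac{n-1}{2}\beta\), which is read off directly from equation (29): for \(X=\partial/\partial u^{i}\) the vertical lift is \(X^{v}=\partial/\partial y^{i}\), so that \(\beta_{p}(\partial/\partial u^{i})=\int_{\partial K_{p}}\partial F/\partial y^{i}\,\mu_{p}\), and equation (29) identifies this with \(\tfrac{2}{n-1}\,\partial r/\partial u^{i}\). Evaluating at the base point \(p=\mathbf{0}\), the functional \(\beta_{\mathbf{0}}\) is exactly the linear functional \(v\mapsto\int_{\partial K}VL\,\mu\) whose identical vanishing is the definition of a balanced indicatrix \(K=K_{\mathbf{0}}\). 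Hence the Minkowskian indicatrix is balanced if and only if \(\mathrm{d}r_{\mathbf{0}}=0\), i.e. the origin is a critical point of \(r\).

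It then remains to identify this critical point with a global minimizer, and here both implications are short. One direction is immediate: since \(\mathbf{0}\) lies in the interior \(U\) of the convex body and \(r\) is smooth, a global minimizer at the origin forces \(\mathrm{d}r_{\mathbf{0}}=0\) by Fermat's criterion, hence balancedness. For the converse I would invoke the previous Theorem on the convexity of the area function: equation (31) exhibits the Hessian \(\partial^{2}r/\partial u^{j}\partial u^{i}=\tfrac{n^{2}-1}{4}\int_{\partial K_{p}}(\partial F/\partial y^{i})(\partial F/\partial y^{j})\,\mu_{p}\) as a positive definite matrix at every \(p\in U\). Since \(U\) is convex, this makes \(r\) strictly convex on \(U\), and for a strictly convex function every critical point is automatically the unique global minimizer; so \(\mathrm{d}r_{\mathbf{0}}=0\) yields that the origin is the global minimizer of \(r\).

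The one point that requires genuine care is the passage from a critical point to a global minimizer, which is not purely local: it really uses the global convexity of \(r\) on all of \(U\). This is precisely what the positive-definiteness of the Hessian in equation (31), combined with the convexity of the domain \(U\) (the interior of the convex body \(K\)), supplies. Everything else is bookkeeping, matching the coordinate formula (29) with the definition of \(\beta\) and of a balanced indicatrix, and observing that the origin is an interior point so that vanishing of the differential coincides with being a critical point. I expect no computational obstacle beyond these identifications.
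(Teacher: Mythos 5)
Your proof is correct and follows essentially the same route the paper intends: the paper states no separate proof for this corollary, but its argument is exactly your combination of Remark 5 (\(\mathrm{d}r=\tfrac{n-1}{2}\beta\), read off from equation (29)) with the convexity Theorem, and this is also how the implication is used later in the proof of the main theorem. Your added observation that (31) gives \emph{strict} convexity (hence uniqueness of the minimizer) is a harmless sharpening, not a departure.
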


To finish this section recall that the projection $\rho$ provides a nice connection between the indicatrix hypersurfaces $\partial K_p$ and $\partial K$. Using theorem 2 we have
$$r(p)=\int_{\partial K_p}\, \mu_p=\int_{\partial K} \left(1-p^k\frac{\partial L}{\partial u^k}\right)^{-\frac{n-1}{2}}\, \mu,$$
where $\mu=\mu_{\bf{0}}$ is the canonical volume form associated to $K$. From the general theory of Minkowski functionals [1]
$$w^k\frac{\partial L}{\partial u^k}_v \leq L(w)$$
for any nonzero element $v$. Substituting $p\in U$ as $w$ we have that
$$1-p^k\frac{\partial L}{\partial u^k}\geq 1-L(p)>0$$
In case of $w=-p$ 
$$1-p^k\frac{\partial L}{\partial u^k}\leq 1+L(-p)$$
and we have the following estimations:
\begin{equation}
\bigg(1+L(-p)\bigg)^{-\frac{n-1}{2}}\leq \frac{r(p)}{r({\bf{0}})} \leq \bigg(1-L(p)\bigg)^{-\frac{n-1}{2}},
\end{equation}
where
$$r({\bf{0}})=\int_{\partial K} 1\, \mu.$$

\section{The proof of the main theorem}

\begin{Thm} \emph{(\bf{The main theorem}).}
If $L$ is a Finsler-Minkowski functional with a balanced indicatrix body of dimension at least three and the Lévi-Civita connection $\nabla$ has zero curvature then $L$ is a norm coming from an inner product, i.e. the Minkowski vector space reduces to a Euclidean one.
\end{Thm}

\begin{Pf}
Since the indicatrix body is ballanced it follows from equation (29) that the area function has zero partial derivatives at the origin. The convexity implies that it has a global minimum at $\bf{0}$ which is just the area of the Euclidean sphere of dimension $n-1$ because of the condition of the vanishing of the curvature (recall that the dimension is at least three which means that we have a simply connected Riemannian manifold with zero curvature - it is isometric to the standard Euclidean space). If $p$ denotes the centroid of $K$ then the area of $\partial K_{p}$ is at least $r({\bf 0})$ and, by the generalized Santalo's inequality [4] (for bodies with center at the origin) $K_{p}$ is a Euclidean ball. Since $K$ is a translate of $K_p$ the functional $L$ associated to $K$ is a Randers-Minkowski functional with vanishing curvature tensor $\mathbb{Q}$. This is impossible unless the translation is trivial, i.e. $p$ is just the origin and $K$ is an Euclidean ball as was to be stated.
\end{Pf}

\section*{Acknowledgements}
This work was partially supported by the European Union and the European Social Fund through the project Supercomputer, the national virtual lab (grant no.:T\'AMOP-4.2.2.C-11/1/KONV-2012-0010)

This work is supported by the University of Debrecen's internal research project.

\end{document}